\numberwithin{equation}{section}
\crefname{algocf}{Algorithm}{Algorithms}
\newtheorem{theorem}{Theorem}[section]
\newtheorem{proposition}[theorem]{Proposition}
\newtheorem{lemma}[theorem]{Lemma}
\theoremstyle{remark}
\theoremstyle{definition}
\newtheorem{example}[theorem]{Example}
\numberwithin{equation}{section}
\newcommand{\N}{\mathbb{N}}
\newcommand{\norm}[1]{\left\Vert #1 \right\Vert}
\newcommand{\abs}[1]{\left\vert #1 \right\vert}
\DeclareMathOperator{\divtmp}{div}
\renewcommand{\div}{\divtmp}
\DeclareMathOperator{\argmin}{arg\,min}
\DeclareMathOperator{\argmax}{arg\,max}
\newcommand{\st}{\,:\,}
\renewcommand{\d}{\,\mathrm{d}}
\newcommand{\eps}{\varepsilon}
\renewcommand{\L}{\mathsf{L}}
\newcommand{\M}{\mathcal M}
\newcommand{\grad}{\nabla}
\DeclareMathOperator{\kernel}{\mathsf{k}}
\DeclareMathOperator{\ccenter}{\mathsf{c}}
\newcommand{\mean}{\mathsf{m}}
\newcommand{\wmean}{\mathsf{m}_{\beta}}
\newcommand{\wcov}{\mathsf{C}_{\beta}}
\newcommand{\wkmean}{\mathsf{m}_{\beta,\kernel}}
\newcommand{\wkcov}{\mathsf{C}_{\beta,\kernel}}
\newcommand{\nR}{\mathbb{R}}
\newcommand{\nc}{\normalcolor}
\newcommand{\nvar}{\mathsf{V}}
\DeclareMathOperator{\tr}{trace}
\definecolor{tangelo}{rgb}{0.98, 0.3, 0.0}
\newcommand{\rev}{}
\let\blx@rerun@biber\relax
\pgfplotsset{compat=1.17}
\begin{document}

\title{Polarized consensus-based dynamics\\for optimization and sampling}
\author{
Leon Bungert
\thanks{Institute of Mathematics, University of Würzburg, Emil-Fischer-Str. 40, 97074 Würzburg, Germany. \href{mailto:leon.bungert@uni-wuerzburg.de}{leon.bungert@uni-wuerzburg.de}} 
\and 
Tim Roith\thanks{Deutsches Elektronen-Synchrotron, Hamburg,
Notkestraße 85, 22607 Hamburg, Germany. \href{mailto:tim.roith@desy.de}{tim.roith@desy.de}}
\and 
Philipp Wacker\thanks{School of Mathematics and Statistics, University of Canterbury, Science Road, Ilam, Christchurch 8140, NZ. \href{mailto:philipp.wacker@canterbury.ac.nz}{philipp.wacker@canterbury.ac.nz}}}

\maketitle

\begin{abstract}
In this paper we propose polarized consensus-based dynamics in order to make consensus-based optimization (CBO) and sampling (CBS) applicable for objective functions with several global minima or distributions with many modes, respectively. 
For this, we ``polarize'' the dynamics with a localizing kernel and the resulting model can be viewed as a bounded confidence model for opinion formation in the presence of common objective.
Instead of being attracted to a common weighted mean as in the original consensus-based methods, which prevents the detection of more than one minimum or mode, in our method every particle is attracted to a weighted mean which gives more weight to nearby particles.  
\rev We prove that in the mean-field regime the polarized CBS dynamics are unbiased for Gaussian targets.
We also prove that in the zero temperature limit and for sufficiently well-behaved strongly convex objectives the solution of the Fokker--Planck equation converges in the Wasserstein-2 distance to a Dirac measure at the minimizer.
Finally, \nc we propose a computationally more efficient generalization which works with a predefined number of clusters and improves upon our polarized baseline method for high-dimensional optimization.
\end{abstract}


\def\holzel{true}
\ifthenelse{\boolean{\holzel}}{%
\vspace{0.5em}

\begin{minipage}{\textwidth}
\itshape
\hfill
\begin{minipage}{.5\textwidth}
\enquote{%
Ich überleg' bei mir\\
Die nächsten Means dafür\\
Währenddessen ich noch rausch'.\\
Die Clusterzentren sind mir wohlbekannt,\\
Ich mein', wir ham an Konsens auch.}
\end{minipage}
\end{minipage}
\vspace{.5em}

\begin{minipage}{.95\textwidth}
\hfill loosely based on Hans Hölzel.
\end{minipage}}

\section{Introduction}

Partially motivated by the success of machine learning methods, which involve the minimization of high-dimensional and strongly non-convex objectives, in recent years the interest in consensus-based methods which do not rely on first-order gradient information has constantly increased.
Typically, zero-order optimization methods either construct a surrogate of the gradient and then perform a gradient-descent-type update \cite{conn2009introduction} or use a swarm model \cite{kennedy1995particle,reynolds1987flocks} where particles are attracted to the particle in the swarm which has the lowest objective value.
Notably, the former approach can lead to problems for strongly non-convex objectives with many critical points.
This issue is circumvented by swarm models, however, they typically do not allow for a mean-field description which makes their mathematical understanding difficult.

In contrast, consensus-based methods aim to achieve consensus by letting particles $\{x^{(i)}\}_{i=1}^J$ explore the objective landscape while attracting them to the weighted average of their positions with respect to the Gibbs measure $\pi\propto\exp(-\beta V)$ of the objective function $V:\nR^d\to\nR$, where $\beta>0$ is an inverse heat parameter. 
This method was first introduced in \cite{pinnau2017consensus} as consensus-based optimization (CBO).
As opposed to most other swarm methods CBO has a mean-field formulation involving a nonlinear Fokker--Planck equation.
In \cite{carrillo2018analytical} convergence to consensus of this equation was first proved, and \cite{ha2020convergence} showed consensus formation of the discrete particle method.
The key property for showing that the consensus is achieved close to the global minimum of the objective $V$ is that $\exp(-\beta V)$ concentrates around the global minimizer of $V$ as $\beta\to\infty$. 
More recently, \cite{fornasier2021consensus_conv} presented an improved convergence analysis which weakens some of the assumptions in \cite{carrillo2018analytical} and directly proves convergence to a point close to the global minimum in a Wasserstein-2 distance.

\begin{figure}[t!]
\parbox{3mm}{\rotatebox[origin=c]{90}{Standard CBO}}
\begin{subfigure}{.31\textwidth}
\includegraphics[width=\textwidth]{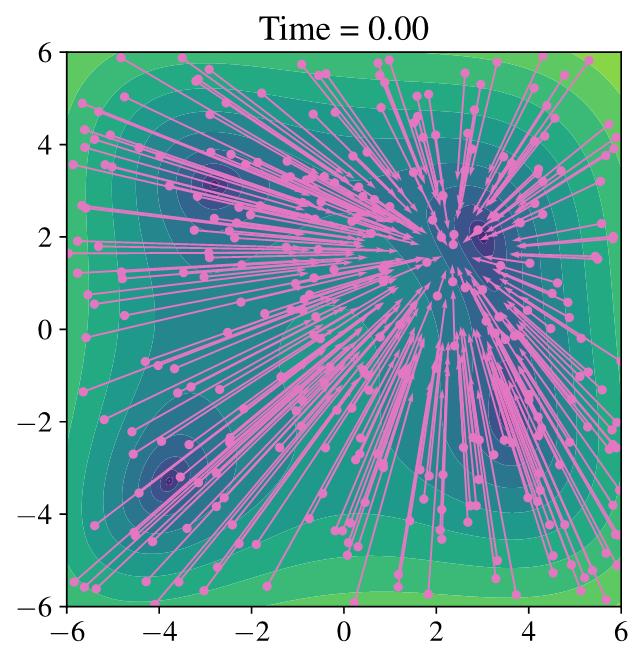}
\end{subfigure}%
\hfill%
\begin{subfigure}{.31\textwidth}
\includegraphics[width=\textwidth]{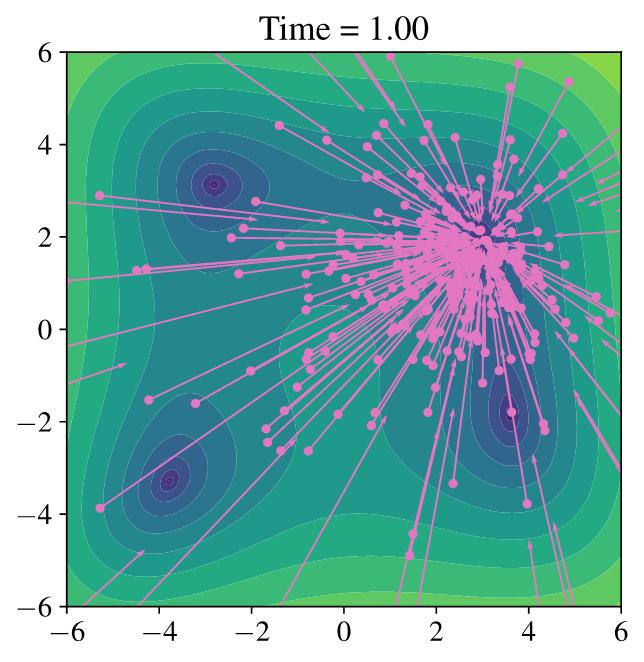}
\end{subfigure}%
\hfill%
\begin{subfigure}{.31\textwidth}
\includegraphics[width=\textwidth]{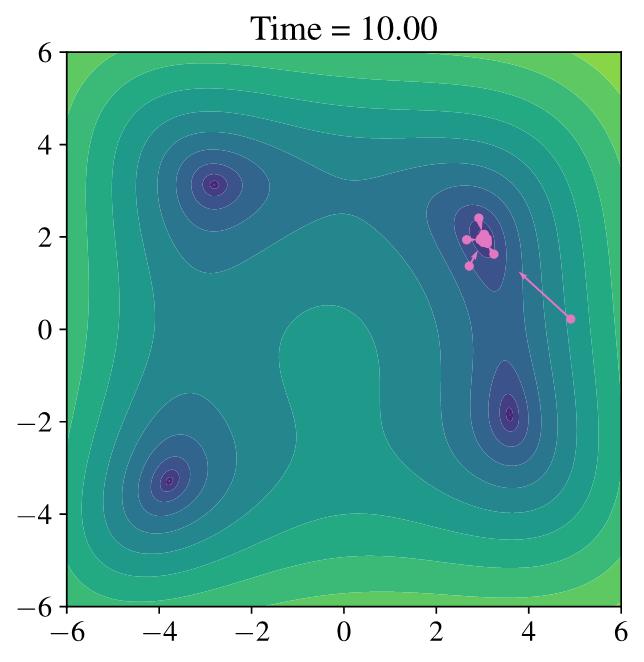}
\end{subfigure}%
\\
\parbox{3mm}{\rotatebox[origin=c]{90}{Polarized CBO}}
\begin{subfigure}{.31\textwidth}
\includegraphics[width=\textwidth]{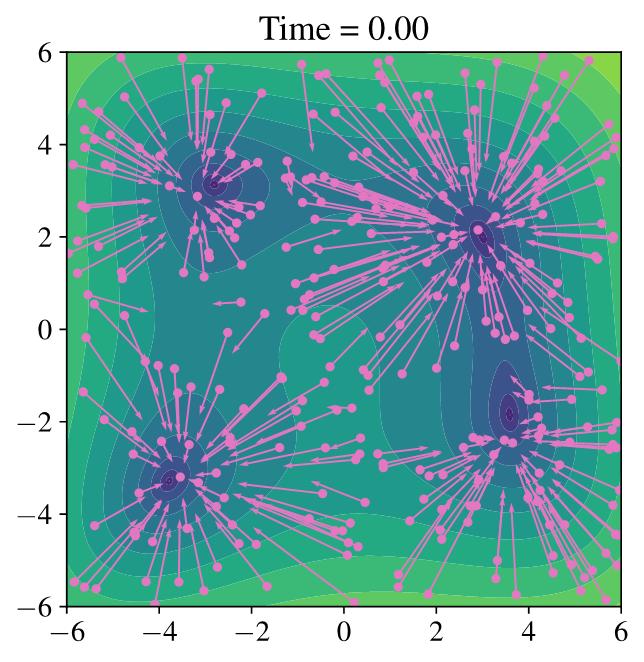}
\end{subfigure}%
\hfill%
\begin{subfigure}{.31\textwidth}
\includegraphics[width=\textwidth]{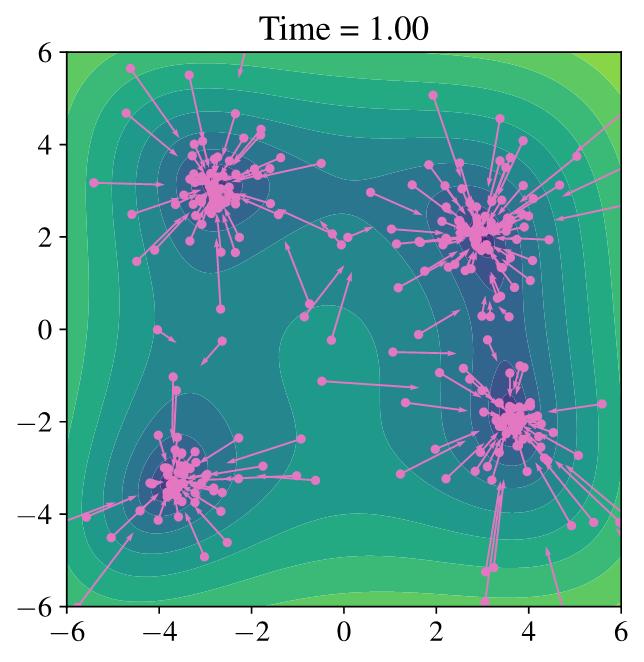}
\end{subfigure}%
\hfill%
\begin{subfigure}{.31\textwidth}
\includegraphics[width=\textwidth]{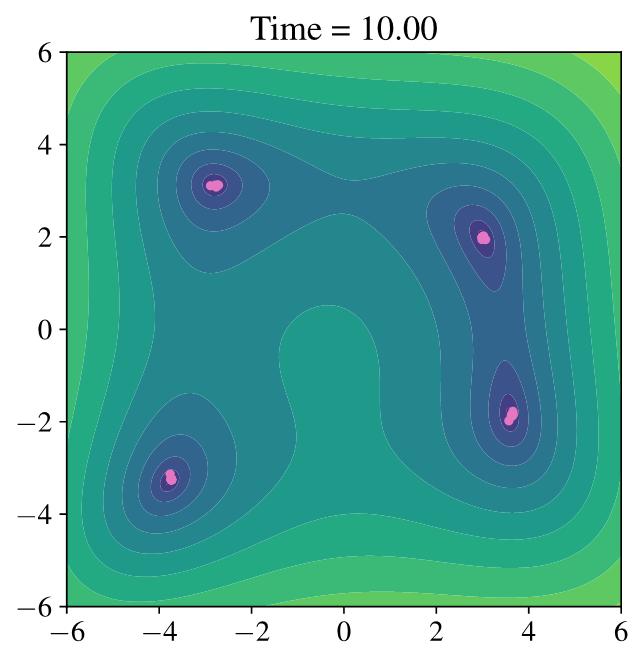}
\end{subfigure}%
\caption{Dynamics of standard and the proposed polarized CBO for mininizing the Himmelblau function.
The points mark particle locations, the arrows the drift field towards the weighted means.\label{fig:Himmelblau_KCBO}}
\end{figure}

Following up on the original formulation of CBO, numerous extensions were suggested.
In \cite{totzeck2020personal} an additional drift term, modelling the time-average of the personal best of every particle, is proposed.
In \cite{carrillo2021consensus_constrained} an extension of the CBO method for constrained problems is suggested, and \cite{carrillo2021consensus_ML} adapted the method for high-dimensional problems from machine learning by introducing random batching and changing the noise model.
In a different line of work, CBO methods on hypersurfaces were studied in \cite{fornasier2020consensus_hyper} and applications to machine learning were investigated in \cite{fornasier2021consensus_ML}.
In \cite{schillings2022ensemble} CBO was enriched by ensemble-based gradient information which comes at low computational cost and can improve upon the baseline method.
For an overview of recent developments we also refer to the review \cite{totzeck2022trends}.
Consensus-based methods have also been transferred to sampling. 
The work \cite{carrillo2022consensus_sampling} proposes a consensus-based sampling (CBS) method by changing the noise term in CBO to include a weighted sample covariance.
This prevents a collapse of the ensemble to full consensus and under suitable assumptions the method was shown to converge to a Gaussian approximation of the distribution $\exp(-V)$. 

While existing consensus-based methods have proven \rev to\nc\ work very well for non-convex objectives with many spurious local extreme points, they all suffer from the conceptual drawback that by definition they can at most \rev approximate\nc\ one minimum, respectively one mode in the context of sampling.
Therefore, the goal of this paper is to design consensus-based particle dynamics which support multiple consensus points, or in other words, polarization.

The central idea for the method which we are presenting in this paper is based on the following though experiment: 
Assuming that two clusters of particles have formed, each one centered around a global minimum, we do not want to compute a shared weighted mean of their positions. 
This would pull one of the clusters into the other one. 
Therefore, we replace the weighted mean---being an integral component of current consensus-based methods---by a collection of localized means which additionally \rev weight \nc particle positions by their proximity to the considered particle.
The localization is achieved by a suitable kernel function.
This then leads to polarized dynamics where multiple consensus points can be reached which \rev opens up the possibility of finding multiple global minima or multiple modes, respectively. Standard CBO, on the other hand is bound to converge to a single minimum.
\nc

As we shall discuss in more detail later, our approach carries strong similarities to bounded confidence models of opinion formation, introduced in \cite{deffuant2000mixing} but see also \cite{hegselmann2002opinion,gomez2012bounded,fortunato2005vector}.
There agents only interact with each other if their opinions are sufficiently close, which in the end can lead to formation of multiple consensus points and is referred to as polarization of opinions in \cite{hegselmann2002opinion}.

\paragraph{Main contributions}

The focus of this paper is the development of the novel polarized consensus-based dynamics and an extensive numerical evaluation of the method. 
The mathematical models we propose come with a lot of new theoretical questions, as well, which will be the subject of future investigations. 
The main contributions and structure of this paper can be summarized as follows:

\begin{itemize}
    \item \cref{ss:polarized_CBO}: We propose a novel polarized computation of weighted means for CBO methods.
    \item \cref{ss:cluster_CBO}: We propose an algorithmic variant which uses a predetermined number of cluster points to compute weighted means.
    \item \cref{ss:polarized_CBS}: We propose a novel polarized computation of weighted covariances for the CBS method and prove that it is unbiased for Gaussian targets.
    \item \rev \cref{sec:analysis}: We prove convergence of the mean-field dynamics in the Wasserstein-2 distance for sufficiently well-behaved objective functions (\cref{thm:convergence_W_2}). \nc
    \item  \cref{sec:numerics}: We conduct extensive numerical and statistical evaluations of our polarized optimization method, showing that it can find multiple global minima in low and high dimensional optimization problems and can even improve upon standard CBO for the detection of one minimum.
    We also test our polarized CBS method for sampling from mixtures of Gaussian and a non-Gaussian distribution where it exhibits better performance than standard CBS.
\end{itemize}

\section{Models}

In this section we describe in detail how standard consensus-based models work and how we generalize them with our polarized approach.
For this, we let $V:\nR^d\to[0,\infty)$ be a (possibly non-convex) objective function and $\beta>0$ an inverse heat parameter.

\subsection{Polarized consensus-based optimization}\label{ss:polarized_CBO}

Given a measure $\rho\in\M(\nR^d)$, for standard CBO one defines a weighted mean as
\begin{align}
    \label{eq:weighted_mean}
    \wmean[\rho] 
    &:= 
    \frac{\int y \exp(-\beta V(y))\d\rho(y)}{\int \exp(-\beta V(y))\d\rho(y)}.
\end{align}
Here and in the rest of the paper all integrals will be over $\nR^d$.
The consensus-based optimization method from \cite{pinnau2017consensus} then amounts to solving the following system of stochastic differential equations (SDEs) for particles $\{x^{(i)}\}_{i=1,\dots,J}$:
\begin{align}\label{eq:CBO}
    \d x^{(i)} = -(x^{(i)} - \wmean[\rho])\d t + \sigma\abs{x^{(i)} - \wmean[\rho]}\d W^{(i)}, \qquad
    \rho := \frac{1}{J}\sum_{i=1}^J\delta_{x^{(i)}},
\end{align}
where $\{W^{(i)}\}_{i=1}^J$ denote independent Brownian motions and $\sigma\geq 0$ determines the strength of the randomness in the model.
The Fokker--Planck equation associated to \labelcref{eq:CBO} is the following PDE
\begin{align}\label{eq:fuck_kerplunk_CBO}
    \partial_t \rho_t(x) = \div\Big(\rho_t(x)(x-\wmean[\rho_t])\Big) + \frac{\sigma^2}{2}\Delta\left(\rho_t(x)\abs{x - \wmean[\rho]}^2\right).
\end{align}
As explained above, this dynamical system forces particles to collapse to consensus, meaning that under certain conditions on $V$ the empirical measures $\rho(t)$ converge to $\delta_{\tilde x}$ as $t\to\infty$, where for $\beta\to\infty$ the consensus-point $\hat{x}$ converges to the global minimizer of $V$, see \cite{carrillo2018analytical,fornasier2021consensus_conv}.

We now explain our polarized modification of CBO for optimizing objective functions with multiple global minima.
Given a measure $\rho\in\M(\nR^d)$ and a kernel function $\kernel:\nR^d\times\nR^d\to[0,\infty)$ we define the weighted mean
\begin{align}
    \label{eq:wk_mean}
    {%
    \wkmean[\rho](x)
    := 
    \frac{\int \kernel(x,y) y \exp(-\beta V(y))\d\rho(y)}{\int \kernel(x,y) \exp(-\beta V(y))\d\rho(y)}, \quad x\in\nR^d.}
\end{align}
The corresponding polarized optimization dynamics take the form
\begin{align}\label{eq:our_CBO}%
\boxed{%
\d x^{(i)} = -(x^{(i)} - \wkmean[\rho](x^{(i)}))\d t + \sigma\abs{x^{(i)} - \wkmean[\rho](x^{(i)})}\d W^{(i)},}
\end{align}
where $\rho := \frac{1}{J}\sum_{i=1}^J\delta_{x^{(i)}}$. The Fokker--Planck equation associated to \labelcref{eq:our_CBO} is the following PDE
\begin{align}\label{eq:polarized_fuck_kerplunk_CBO}
\resizebox{.9\textwidth}{!}{$%
    \partial_t \rho_t(x) = \div\Big(\rho_t(x)(x-\wkmean[\rho_t](x))\Big) + \frac{\sigma^2}{2}\Delta\left(\rho_t(x)\abs{x - \wkmean[\rho_t](x)}^2\right).$}
\end{align}
Note that the idea of letting the dynamics of particle $x^{(i)}$ mainly depend on spatially close particles, as modelled through the kernel $\kernel$, has strong similarities to bounded confidence models of opinion dynamics introduced in \cite{deffuant2000mixing}.
In these models, typically there is no objective function to be minimized and the kernel is of the form $\kernel(x,y) := 1_{\abs{x-y}\leq\kappa}(x,y)$, where $\kappa>0$ is a so-called confidence level.
The simplest such dynamics then take the form
\begin{align}\label{eq:bounded_confidence}
    \frac{\d x^{(i)}}{\d t} = - \left(x^{(i)} - \frac{1}{N(x^{(i)})}\sum_{j=1}^J 1_{\abs{x^{(i)}-x^{(j)}}\leq\kappa}x^{(j)}\right),
\end{align}
where $N(x^{(i)}) := \#\{1\leq j \leq J \st \abs{x^{(i)}-x^{(j)}}\leq\kappa\}$ denotes the numbers of points which are not farther than $\kappa$ away from $x^{(i)}$.
Notably, \labelcref{eq:bounded_confidence} coincides with \labelcref{eq:our_CBO} for the special case of $\kernel(x,y) := 1_{\abs{x-y}\leq\kappa}(x,y)$, a constant objective $V\equiv const$, and $\sigma=0$.
More generally, dynamics of the form \labelcref{eq:our_CBO,eq:bounded_confidence} can be viewed as processes on co-evolving networks or graphs where the weights between different particles $x^{(i)}$ and $x^{(j)}$ depend on the kernel $\kernel$ and the loss function $V$. 
We refer to \cite{burger2022kinetic,burger2021network} for a unified description and the study of mean-field equations for general processes of this form.
\rev
Furthermore, for $V=const$ and $\sigma=0$, a time discretization of \cref{eq:our_CBO} via an Euler--Maruyama scheme with stepsize $\d t =1$, yields the update
\begin{align*}
x^{(i)} \gets x^{(i)} -(x^{(i)} - \wkmean[\rho](x^{(i)})) = 
\frac{\sum_{j=1}^N \kernel(x^{(i)},x^{(j)})\, x^{(j)}}{\sum_{j=1}^N
\kernel(x^{(i)},x^{(j)})},
\end{align*}
which is known as the \emph{mean-shift} algorithm \cite{schnell1964methode,fukunaga1975estimation}.
\nc
In the following, we would like to discuss two important special cases of our model:
If one chooses the kernel $\kernel(x,y)=1$ for all $x,y\in\nR^d$, then \labelcref{eq:wk_mean,eq:our_CBO,eq:polarized_fuck_kerplunk_CBO} simply reduce to the standard CBO setup \labelcref{eq:weighted_mean,eq:CBO,eq:fuck_kerplunk_CBO}.
Hence, our method is a generalization of CBO.
On the other hand, if one chooses the Gaussian kernel $\kernel(x,y) := \exp\left(-\frac{\abs{x-y}^2}{2\kappa^2}\right)$, then the weighted mean $\wkmean[\rho](x)$ can be rewritten as
\begin{align}
    \wkmean[\rho](x) := \frac{\int y \exp\left(-\beta V(y) - \frac{\abs{x-y}^2}{2\kappa^2}\right)\d\rho(y)}{\int \exp\left(-\beta V(y) - \frac{\abs{x-y}^2}{2\kappa^2}\right)\d\rho(y)}.
\end{align}
In this case our method can be regarded as standard CBO applied to a spatially varying quadratic regularization of the objective function $y\mapsto V_x(y)$, defined as
\begin{align*}
    V_x(y) := V(y) + \frac{1}{2\kappa^2\beta}\abs{x-y}^2.
\end{align*}
Note that the central difference between the standard and our method is that the weighted mean \labelcref{eq:wk_mean} depends on the particle position $x$ and is not same for all particles. Especially for high-dimensional problems it was shown in \cite{carrillo2021consensus_ML} that the performance of CBO can be significantly improved when using a coordinate-wise noise model. 
To be precise they suggest the following replacement in \labelcref{eq:our_CBO}
\begin{align}\label{eq:compNoise}
    \abs{x^{(i)} - \wkmean[\rho](x^{(i)})}\d W^{(i)}
    \longrightarrow
    \sum_{n=1}^d \vec e_n \left(x^{(i)} - \wkmean[\rho](x^{(i)})\right)_n \d W^{(i)}_n,
\end{align}
where $\vec e_n$ denotes the $n$-th unit vector in $\nR^d$.
This changes the Laplacian term in the corresponding Fokker--Planck equation \labelcref{eq:polarized_fuck_kerplunk_CBO} to
\begin{align*}
    \Delta(\rho_t(x)\abs{x-\wkmean[\rho_t](x)}
    \longrightarrow
    \sum_{n=1}^d \partial_{nn}^2\Big(\rho_t(x)(x - \wkmean[\rho_t](x))_n\Big).
\end{align*}
For computing an approximate solution of the stochastic CBO dynamics with either of the two discussed noise models, we employ a standard Euler--Maruyama scheme which we sum up in \cref{alg:CBO}.
There the function \textbf{ComputeMean} determines the precise variant of CBO that is being used. 
In \cref{alg:PolarCBO} we specify this function for the proposed polarized CBO which reduces to standard CBO for a constant kernel $\kernel\equiv 1$.
In the next section we will discus an algorithmic variant (see \cref{alg:CCBO}) of the mean computation \labelcref{eq:wk_mean} which is computationally more efficient and exhibits empirical advantages in high dimensions.

\begin{algorithm}
\caption{General consensus-based optimization}\label{alg:CBO}
\textbf{Input:} Time step size $\d t>0$, initial particles $\{x^{(j)}\}_{j=1}^J$, diffusion parameter $\sigma\geq 0$, noise~model~$\mathsf{M}$
\begin{algorithmic}
\For{$i=1,\dots,J$}
    \State $\mean^{(i)} \gets \textbf{ComputeMean}$
    \If{$\mathsf{M}$ = Standard noise}
        \State $\xi \sim \mathcal{N}(0,\d t \cdot I_{d\times d})$
        \State $n^{(i)} \gets \abs{x^{(i)} - \mean^{(i)}}\xi$
    \EndIf
    \If{$\mathsf{M}$ = Coordinate noise}
        \State $\xi \sim \mathcal{N}(0,\d t\cdot I_{d\times d})$
        \State $n^{(i)} \gets \sum_{n=1}^d e_n (x^{(i)} - \mean^{(i)})_n \xi_n$
    \EndIf
    \State $x^{(i)} \gets x^{(i)} - \d t (x^{(i)} - \mean^{(i)}) + \sigma \,n^{(i)}$
\EndFor
\end{algorithmic}
\end{algorithm}

\begin{algorithm}
\caption{\textbf{ComputeMean} for polarized CBO}\label{alg:PolarCBO}
\textbf{Input:} particle $x^{(i)}$, particle swarm $\{x^{(j)}\}_{j=1}^J$, kernel $\kernel$, objective $V$, inverse temperature $\beta>0$\\
\textbf{Output:} $\mean^{(i)} \gets \frac{\sum_{j=1}^J x^{(j)} \kernel(x^{(i)},x^{(j)})\exp(-\beta V(x^{(j)}))}{\sum_{j=1}^J\kernel(x^{(i)},x^{(j)})\exp(-\beta V(x^{(j)}))}$
\end{algorithm}

\subsection{Cluster-based model}
\label{ss:cluster_CBO}

In this section we propose an algorithmic alternative to the weighted mean, defined in \labelcref{eq:wk_mean} with the motivation of making the computation of the weighted means computationally more efficient and to also encourage polarizing effects between the different means.
Given particles $x^{(i)}$ for $i=1,\ldots,J$, standard CBO uses one weighted mean whereas our polarized version uses $J$ many weighted means. 

As an alternative model we consider cluster means $\ccenter^{(j)}$ for $j=1,\ldots, J_c$, where $J_c\leq J$. 
We encode the probability that the particle $x^{(i)}$ belongs to the cluster mean $\ccenter^{(j)}$ with $p_{ij}>0$.
\rev Given initial probabilities $p_{ij}>0$, we perform the following iterative\nc\ update for each $j=1,\ldots, J_c$,
\begin{subequations}\label{eq:prob_update}
\begin{alignat}{2}
p_{i}^{\text{max}} &:= \max_{j =1,\ldots,J_c} p_{ij}\quad&&\text{ for }i=1,\ldots, J,\\
r_{ij}&:= \left(\frac{p_{ij}}{p_{i}^{\text{max}}}\right)^\alpha
\quad&&\text{ for }i=1,\ldots, J,\\
\tilde p_{ij} &:= r_{ij} \kernel(x^{(i)}, \ccenter^{(j)}).&&
\end{alignat}
\end{subequations}
\rev
The new values of the probabilities $p_{ij}$ are then obtained by renormalization over $j$, i.e.,
\begin{align}\label{eq:normalization}
p_{ij} \gets \frac{\tilde{p}_{ij}}{\sum_{j=1}^{J_c} \tilde{p}_{ij}}
\end{align}
and the clusters means are then updated via
\begin{align}\label{eq:ccenter_update}
\ccenter^{(j)} \gets \frac{\sum_{i=1}^J x^{(i)} p_{ij} \exp(-\beta V(x^{(i)}))}{\sum_{i=1}^J p_{ij} \exp(-\beta V(x^{(i)}))}.
\end{align}
\nc\
Here, $\alpha\geq 0$ is a discounting coefficient.
The interpretation of the above scheme is as follows:
If the particle $x^{(i)}$ identifies the cluster center $\ccenter^{(j)}$ as ``belonging to it the most'', then $r_{ij}=1$ and the probability that $x^{(i)}$ indeed belongs to $\ccenter^{(j)}$ is only determined by the spatial proximity, encoded through $\kernel(x^{(i)},\ccenter^{(j)})$.
If, however, it ``feels more dedicated'' to another cluster mean $\ccenter^{(k)}$ (different from $\ccenter^{(j)}$), then $p_{ij} < p_{ik}$ and thus $r_{ij} < 1$. This results in a correction of the particle-cluster correspondence, \rev strengthening the bond to cluster $j$ of particle~$i$.\nc\
The exponent $\alpha\geq 0$ determines the strength of this additional polarization incentive, with larger $\alpha\geq 0$ leading to greater polarization.
In order to obtain the indivdual mean for each particle (like in the polarized CBO scheme) we simply compute
\begin{align}\label{eq:means}
\mean^{(i)} :=
\sum_{j=1}^{J_c} p_{ij} \ccenter^{(j)}.
\end{align}
We summarize the cluster-based mean computations in \cref{alg:CCBO}, which can be used in place of \cref{alg:PolarCBO} in the CBO scheme \cref{alg:CBO}.
\begin{algorithm}
\caption{\textbf{ComputeMean} for cluster-based method}\label{alg:CCBO}
\textbf{Input:} particle $x^{(i)}$, particle swarm $\{x^{(j)}\}_{j=1}^J$, cluster centers $\{\ccenter^{(j)}\}_{j=1}^{J_c}$, probabilities $\{p_{ij}\}_{j=1}^{J_c}$, discounting coefficient $\alpha\geq 0$, kernel $\kernel$, objective $V$, inverse temperature $\beta>0$
\begin{algorithmic}
\State $p_{i}^{\text{max}} \gets \max_{j =1,\ldots,N_c} p_{ij}$\Comment{Compute probability of likeliest cluster}
\For{$j=1,\dots, J_c$}
    \State $r_{ij}\gets \left(\frac{p_{ij}}{p_{i}^{\text{max}}}\right)^\alpha$
    \State $\tilde p_{ij} \gets r_{ij}\kernel(x^{(i)}, \ccenter^{(j)})$\Comment{Discount probabilities for other clusters}
\EndFor
\For{$j=1,\dots, J_c$}
    \State $p_{ij} \gets \frac{\tilde{p}_{ij}}{\sum_{j=1}^{J_c} \tilde{p}_{ij}}$\Comment{Normalize probabilities}
\EndFor
\For{$j=1,\dots, J_c$}
    \State $\ccenter^{(j)} \gets \frac{\sum_{i=1}^J x^{(i)} p_{ij} \exp(-\beta V(x^{(i)}))}{\sum_{i=1}^J p_{ij} \exp(-\beta V(x^{(i)}))}$\Comment{Update cluster centers}
\EndFor
\end{algorithmic}
\textbf{Output:}
$m_i \gets
\sum_{j=1}^{J_c} p_{ij} \ccenter^{(j)}$
\end{algorithm}

Note that for $\alpha=\infty$ we have that
\begin{align*}
    r_{ij} 
    :=
    \begin{cases}
    1,\qquad &\text{if }j\in\argmax_{j=1,\dots,J_c}p_{ij},\\
    0,\qquad&\text{else},
    \end{cases}
\end{align*}
meaning that the only probability which survive are the one for the likeliest cluster centers.
Another interesting special case arises when choosing the Gaussian kernel $\kernel(x,y) := \exp\left(-\frac{\abs{x-y}^2}{2\kappa^2}\right)$.
In this case one gets that
\begin{align*}
p_{ij} 
&:= \frac{\exp\left(-\frac{1}{2\kappa^2}\abs{x^{(i)}-\ccenter^{(j)}}^2 + \log r_{ij}\right)}{\sum_{j=1}^{J_c} \exp\left(-\frac{1}{2\kappa^2}\abs{x^{(i)}-\ccenter^{(j)}}^2 + \log r_{ij}\right)}
\\
&\longrightarrow
\begin{cases}
1\quad&\text{if }\abs{x^{(i)}-\ccenter^{(j)}} \leq \abs{x^{(i')}-\ccenter^{(j')}}\,\forall i'=1,\dots,J,\,j'=1,\dots,J_c,\\
0\quad&\text{else},
\end{cases}
\end{align*}
as $\kappa\to 0$. So for very small values of $\kappa$ a hard assignment of the points $x^{(i)}$ to the clusters $\ccenter^{(j)}$ based on spatial proximity is performed which is reminiscent of the k-means algorithm.

It is important to initialize all quantities correctly in order to obtain a meaningful algorithm. 
If one naively initializes $p_{ij} := \frac{1}{J_c}$ for all $i=1,\dots,J$ and computes initial cluster centers via \labelcref{eq:ccenter_update}, then $\ccenter^{(j)} = \frac{\sum_{i=1}^J x^{(i)} \exp(-\beta V(x^{(i)}))}{\sum_{i=1}^J\exp(-\beta V(x^{(i)}))}$ equals the standard CBO weighted mean for all $j$.
Correspondingly, the probability updates \labelcref{eq:prob_update,eq:normalization} will leave $p_{ij}$ untouched. 
In this case the method reduces precisely to standard CBO.
Therefore, we initialize the probabilities randomly by drawing $\tilde p_{ij} \sim \operatorname{Unif}(0,1)$ and normalizing with \labelcref{eq:normalization}.
Then we compute initial cluster means using \labelcref{eq:ccenter_update}.

Let us also mention that the complexity of CBO where means are computed with \cref{alg:CCBO} is order $O(J\cdot J_c)$ which is significantly smaller than $O(J^2)$ which is the complexity of CBO with the baseline polarized mean computation from \cref{alg:PolarCBO}. This is due to the fact that the cluster-based method models consensus and polarization of individuals with respect to existing opinionated groups, with the cluster center as a surrogate for these groups, whereas the polarized CBO approach tracks all particles' interactions with each other.

Last, we demonstrate how to obtain a SDE and mean-field interpretation of \cref{alg:CCBO}, where for conciseness we restrict ourselves to the case $\alpha=0$.
In this case \labelcref{eq:normalization,eq:ccenter_update} reduce to
\begin{align*}
    p_{ij} \gets \frac{\kernel(x^{(i)},\ccenter^{(j)})}{\sum_{j=1}^{J_c}\kernel(x^{(i)},\ccenter^{(j)})}
    ,
    \qquad
    \ccenter^{(j)} \gets \frac{\sum_{i=1}^J x^{(i)} \kernel(x^{(i)},\ccenter^{(j)}) \exp(-\beta V(x^{(i)}))}{\sum_{i=1}^J \kernel(x^{(i)},\ccenter^{(j)}) \exp(-\beta V(x^{(i)}))}.
\end{align*}
This allows us to express the cluster-based mean computation with $\alpha=0$ as the coupled system:
\begin{subequations}
\begin{align}
    \d x^{(i)} &= -(x^{(i)} - \mean^{(i)})\d t + \sigma\abs{x^{(i)} - 
    \mean^{(i)}}\d W^{(i)},
    \\
    \mean^{(i)} &=
    \frac{\sum_{j=1}^{J_c}\kernel(x^{(i)},\ccenter^{(j)})\ccenter^{(j)}}{\sum_{j=1}^{J_c}\kernel(x^{(i)},\ccenter^{(j)})},
    \\
    \ccenter^{(j)}
    &=
    \frac{\int x \kernel(x,\ccenter^{(j)})\exp(-\beta V(x))\d\rho_t(x)}{\int \kernel(x,\ccenter^{(j)})\exp(-\beta V(x))\d\rho_t(x)},
    \\
    \rho_t &:= \frac{1}{N}\sum_{i=1}^J \delta_{x^{(i)}}.
\end{align}
\end{subequations}
Note that \cref{alg:CCBO} approximates the fixed point equation for $\ccenter^{(j)}$ with one iteration of the fixed point map.
The corresponding mean-field system is readily obtained as:
\begin{subequations}
\begin{align}
    \partial_t \rho_t(x) &= \div(\rho_t(x)(x - \mean[\rho_t](x))) + \frac{\sigma^2}{2}\Delta\left(\rho_t(x)\abs{x - 
    \mean[\rho_t](x)}^2\right),
    \\
    \mean[\rho_t](x) &=
    \frac{\sum_{j=1}^{J_c}\kernel(x,\ccenter^{(j)})\ccenter^{(j)}}{\sum_{j=1}^{J_c}\kernel(x,\ccenter^{(j)})},
    \\
    \ccenter^{(j)}
    &=
    \frac{\int x \kernel(x,\ccenter^{(j)})\exp(-\beta V(x))\d\rho_t(x)}{\int \kernel(x,\ccenter^{(j)})\exp(-\beta V(x))\d\rho_t(x)}.
\end{align}
\end{subequations}
We expect that similarly one can derive a SDE and mean-field interpretation of the model with $\alpha>0$ but we do not endeavour this here.

\subsection{Polarized consensus-based sampling}\label{ss:polarized_CBS}

The last model variant that we consider here is an application to sampling.
In \cite{carrillo2022consensus_sampling} a sampling version of CBO was proposed and termed consensus-based sampling (CBS).
Defining a weighted covariance matrix as
\begin{align}
    \label{eq:weighted_cov}
    \wcov[\rho] 
    &:=
    \frac{\int (y-\wmean[\rho])\otimes(y-\wmean[\rho]) \exp(-\beta V(y))\d\rho(y)}{\int \exp(-\beta V(y))\d\rho(y)}.
\end{align}
CBS aims to sample from the measure $\exp(-V)$ and by solving the following system of SDEs:
\begin{align}\label{eq:CBS}
    \d x^{(i)} = -(x^{(i)} - \wmean[\rho])\d t + \sqrt{2\lambda^{-1}\wcov[\rho]}\d W^{(i)}, \qquad
    \rho := \frac{1}{J}\sum_{i=1}^J\delta_{x^{(i)}}.
\end{align}
Here the parameter $\lambda$ interpolates between an optimization method ($\lambda=1$) and a sampling method ($\lambda=(1+\beta)^{-1}$).
For the latter scaling of $\lambda$ a collapse of $\rho$ is avoided and $\rho$ samples from $\exp(-V)$ if this measure is Gaussian.
The Fokker--Planck equation associated with \labelcref{eq:CBS} is given by
\begin{align}\label{eq:fuck_kerplunk_CBS}
    \partial_t \rho_t(x) = \div\Big(\rho_t(x)(x-\wmean[\rho_t])\Big)+ \lambda^{-1}\div\Big(\wcov[\rho_t]\grad \rho_t(x)\Big).
\end{align}
We can polarize CBS by using the mean from \labelcref{eq:wk_mean} to define a weighted variance, as follows:
\begin{align}
{%
\wkcov[\rho](x)
:=
\frac{\int
\kernel(x,y)(y-\wkmean[\rho](x))\otimes(y-\wkmean[\rho](x)) \exp(-\beta V(y))\d\rho(y)}{\int \kernel(x,y) \exp(-\beta V(y))\d\rho(y)},}
\end{align}
for $x\in\nR^d$. The corresponding CBS dynamics are then given by
\begin{align}\label{eq:our_CBS}
\boxed{%
\d x^{(i)} = -(x^{(i)} - \wkmean[\rho](x^{(i)}))\d t + \sqrt{2\lambda^{-1}\wkcov[\rho](x^{(i)})}\d W^{(i)},}
\end{align}
where $\rho := \frac{1}{J}\sum_{i=1}^J\delta_{x^{(i)}}$. The associated Fokker--Planck equation is
\begin{align}\label{eq:polarized_fuck_kerplunk_CBS}
    {%
    \partial_t \rho_t(x) = \div\big(\rho_t(x)(x-\wkmean[\rho_t](x))\big)+\lambda^{-1}\div\big(\wkcov[\rho_t](x)\grad \rho_t(x)\big).}
\end{align}
Perhaps a little unexpectedly we can prove that, just as the Fokker--Planck equation of CBS \labelcref{eq:fuck_kerplunk_CBS}, for Gaussian kernels of arbitrary width our version \labelcref{eq:polarized_fuck_kerplunk_CBS} leaves Gaussians measures invariant, which is an important consistency property.

\begin{proposition}\label[proposition]{prop:unbiased}
The polarized CBS dynamics in sampling mode with a Gaussian kernel leaves a Gaussian target measure invariant. More precisely, let \\ $\kernel(x,y) := \exp\left(-\frac{1}{2}(x - y)^T \Sigma_1^{-1}(x - y)\right)$ for a symmetric and positive definite covariance matrix $\Sigma_1\in\mathbb{R}^{d\times d}$, and let
\begin{align*}
    V(y) := \frac{1}{2}(x - m)^T\Sigma_2^{-1}(x - m)
\end{align*}
for some $m\in\nR^d$ and a symmetric and positive definite covariance matrix $\Sigma\in\nR^{d\times d}$.
Then $\rho^\star$ defined as
\begin{align*}
    \rho^\star(x) := \exp(-V(x))
\end{align*}
is a stationary solution of the Fokker--Planck equation \labelcref{eq:polarized_fuck_kerplunk_CBS} for any $\beta>0$ and for $\lambda=(1+\beta)^{-1}$.
\end{proposition}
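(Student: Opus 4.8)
The plan is to verify stationarity by direct substitution: plug $\rho^\star$ into the polarized Fokker--Planck equation \labelcref{eq:polarized_fuck_kerplunk_CBS}, compute the polarized weighted mean $\wkmean[\rho^\star](x)$ and weighted covariance $\wkcov[\rho^\star](x)$ in closed form, and show that the resulting probability flux vanishes pointwise, so that $\partial_t\rho^\star\equiv 0$. Throughout, the objective is understood as $V(y)=\tfrac12(y-m)^T\Sigma_2^{-1}(y-m)$, and the normalization of $\rho^\star$ is immaterial, since $\wkmean$ and $\wkcov$ depend on $\rho^\star$ only through normalized ratios and the equation is linear in $\rho_t$ once $\wkmean$ and $\wkcov$ are fixed.

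\textbf{Step 1 (the weighting measure is Gaussian).} For fixed $x\in\nR^d$, both $\wkmean[\rho^\star](x)$ and $\wkcov[\rho^\star](x)$ are moments of the probability measure $\d\nu_x(y)\propto \kernel(x,y)\exp(-\beta V(y))\,\rho^\star(y)\,\d y$. Since $\exp(-\beta V)\,\rho^\star=\exp(-(1+\beta)V)$, the log-density of $\nu_x$ equals, up to an additive constant in $y$,
\[
-\tfrac12(x-y)^T\Sigma_1^{-1}(x-y)-\tfrac{1+\beta}{2}(y-m)^T\Sigma_2^{-1}(y-m).
\]
Completing the square in $y$ and setting $A:=\Sigma_1^{-1}+(1+\beta)\Sigma_2^{-1}$ (symmetric positive definite), $\nu_x$ is the Gaussian with precision $A$, hence covariance $A^{-1}$, and mean $A^{-1}\big(\Sigma_1^{-1}x+(1+\beta)\Sigma_2^{-1}m\big)$. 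Therefore
\[
\wkmean[\rho^\star](x)=A^{-1}\big(\Sigma_1^{-1}x+(1+\beta)\Sigma_2^{-1}m\big),\qquad \wkcov[\rho^\star](x)=A^{-1},
\]
the second identity because the central second moment of a Gaussian is its covariance; note in particular that $\wkcov[\rho^\star](x)$ is independent of $x$.

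\textbf{Step 2 (assemble the flux).} Using $A-\Sigma_1^{-1}=(1+\beta)\Sigma_2^{-1}$ we get $x-A^{-1}\Sigma_1^{-1}x=(1+\beta)A^{-1}\Sigma_2^{-1}x$, so the drift simplifies to $x-\wkmean[\rho^\star](x)=(1+\beta)\,A^{-1}\Sigma_2^{-1}(x-m)$. Since $\grad\rho^\star(x)=-\rho^\star(x)\,\Sigma_2^{-1}(x-m)$, the flux appearing in \labelcref{eq:polarized_fuck_kerplunk_CBS} becomes
\[
\rho^\star(x)\big(x-\wkmean[\rho^\star](x)\big)+\lambda^{-1}\wkcov[\rho^\star](x)\,\grad\rho^\star(x)=\big((1+\beta)-\lambda^{-1}\big)\,\rho^\star(x)\,A^{-1}\Sigma_2^{-1}(x-m),
\]
which vanishes identically exactly when $\lambda^{-1}=1+\beta$, i.e. $\lambda=(1+\beta)^{-1}$. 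As \labelcref{eq:polarized_fuck_kerplunk_CBS} reads $\partial_t\rho_t=\div(\text{flux})$, this yields $\partial_t\rho^\star\equiv 0$, so $\rho^\star$ is a stationary solution.

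\textbf{Main obstacle.} The only genuinely delicate part is the Gaussian bookkeeping of Step 1 --- correctly completing the square for the matrix-valued quadratic form and, crucially, confirming that the $x$-dependent localization $\kernel(x,\cdot)$ still produces a covariance $A^{-1}$ that does not depend on $x$. Once this is established, the factor $\Sigma_2^{-1}(x-m)$ shared by the drift and the diffusion term makes the cancellation transparent, and no analytic subtleties arise because every object involved is a smooth Gaussian density with positive definite covariance.
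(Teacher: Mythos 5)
Your proof is correct and follows essentially the same route as the paper: both recognize $\kernel(x,\cdot)\exp(-(1+\beta)V)$ as a Gaussian with precision $\Sigma_1^{-1}+(1+\beta)\Sigma_2^{-1}$, read off $\wkmean[\rho^\star](x)$ and the $x$-independent covariance, and verify that the flux in \labelcref{eq:polarized_fuck_kerplunk_CBS} vanishes exactly when $\lambda=(1+\beta)^{-1}$. The only cosmetic difference is that you simplify the drift to $(1+\beta)A^{-1}\Sigma_2^{-1}(x-m)$ before assembling the flux, whereas the paper substitutes the expression for $m_x$ at the end; the cancellation is identical.
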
%
\begin{proof}
We use \rev the\nc\ following formula for the product of two Gaussians, which can be found, for instance, in \cite{petersen2008matrix}, to obtain
\begin{align*}
    \kernel(x,y)\exp(-(1+\beta)V(y)) 
    &= 
    \exp\left(-\frac{1}{2}(x - y)^T \Sigma_1^{-1}(x - y)\right)\\
    &\exp\left(-\frac{1+\beta}{2}(x - m)^T\Sigma_2^{-1}(x - m)\right)
    \\
    &=
    c_x
    \exp\left(-\frac{1}{2}(y - m_x)^T \Sigma_3 (y-m_x)\right),
\end{align*}
where $c_x>0$ is a normalization constant and
\begin{align*}
    \Sigma_3 &:= \left(\Sigma_1^{-1} + (1+ \beta)\Sigma_2^{-1}\right)^{-1},
    \\
    m_x &:= \Sigma_3\left(\Sigma_1^{-1}x + (1+\beta)\Sigma_2^{-1}m\right).
\end{align*}
Using this we obtain
\begin{align*}
    \int \kernel(x,y) \exp(-\beta V(y))\d\rho^\star(y) 
    &=
    \int \kernel(x,y) \exp(-(1+\beta)V(y))\d y
    \\
    &=
    c_x 
    \int 
    \exp\left(-\frac{1}{2}(y - m_x)^T \Sigma_3 (y-m_x)\right)
    \d y
    \\
    &=
    c_x (2\pi)^\frac{d}{2}\det(\Sigma_3)^\frac{1}{2}.
\end{align*}
Similarly, we obtain
\begin{align*}
    \int y \kernel(x,y) \exp(-V(y))\d\rho^\star(y) 
    &=
    c_x 
    \int 
    y
    \exp\left(-\frac{1}{2}(y - m_x)^T \Sigma_3 (y-m_x)\right)
    \d y
    \\
    &=
    c_x (2\pi)^\frac{d}{2}\det(\Sigma_3)^\frac{1}{2} m_x.
\end{align*}
Combining the two we obtain
\begin{align*}
    \wkmean[\rho^\star](x)
    &=
    m_x.
\end{align*}
Similarly, we can compute the weighted covariance as
\begin{gather*}
    \wkcov[\rho^\star](x)
    =
    \frac{\int (x-\wkmean[\rho^\star](x))\otimes (x-\wkmean[\rho^\star](x)) \kernel(x,y) \exp(-\beta V(y))\d\rho^\star(y) }{\int \kernel(x,y) \exp(-V(y))\d\rho^\star(y)}
    \\
    =
    \frac{c_x \int (x-m_x) \otimes (x-m_x) c_x
    \exp\left(-\frac{1}{2}(y - m_x)^T \Sigma_3 (y-m_x)\right) \d y }{c_x(2\pi)^\frac{d}{2}\det(\Sigma_3)^\frac{1}{2}}
    =
    \Sigma_3.
\end{gather*}
Hence, we get for $\lambda = (1+\beta)^{-1}$:
\begin{align*}
&\phantom{{}={}}
\rho^\star(x) (x - \wkmean[\rho^\star](x)) + \lambda^{-1}\wkcov[\rho^\star](x)\grad\rho^\star(x)
\\
&=
\rho^\star(x) (x-m_x) + (1+\beta) \Sigma_3 \grad\rho^\star(x)
\\
&=
\rho^\star(x) (x-m_x) - (1+\beta) \Sigma_3 \grad V(x)\rho^\star(x)
\\
&=
\rho^\star(x)
\left(
(x-m_x) - (1+\beta) \Sigma_3\Sigma_2^{-1}(x-m)
\right)\rev .\nc
\end{align*}
Now we use the definition of $m_x = \Sigma_3(\Sigma_1^{-1}x+(1+\beta)\Sigma_2^{-1}m)$ to obtain
\begin{align*}
    &\phantom{{}={}}
    x - m_x - (1+\beta) \Sigma_3\Sigma_2^{-1}(x-m)
    \\
    &=
    x - \Sigma_3(\Sigma_1^{-1}x+(1+\beta)\Sigma_2^{-1}m) - (1+\beta) \Sigma_3 \Sigma_2^{-1}(x-m)
    \\
    &=
    x - \Sigma_3\left(\Sigma_1^{-1} + (1+\beta)\Sigma_2^{-1}\right)x
    =0,
\end{align*}
using that $\Sigma_3 = \left(\Sigma_1^{-1} + (1+\beta)\Sigma_2^{-1}\right)^{-1}$.
This proves that $\rho^\star$ is a stationary solution of the Fokker--Planck equation \labelcref{eq:polarized_fuck_kerplunk_CBS}.
\end{proof}

\subsection{\rev Towards mean-field analysis\nc}
\label{sec:analysis}

In this section we will make some remarks on the analysis of the Fokker--Planck equation \labelcref{eq:polarized_fuck_kerplunk_CBO} for polarized CBO which we repeat here for convenience:
\begin{align*}
    \partial_t \rho_t(x) = \div\Big(\rho_t(x)(x-\wkmean[\rho_t](x))\Big) + \frac{\sigma^2}{2}\Delta\left(\rho_t(x)\abs{x - \wkmean[\rho](x)}^2\right).
\end{align*}
\rev 
We shall first explain why current analytical frameworks for the mean-field analysis of consensus-based optimization do not carry over to this equation.
Then, we shall prove convergence as $t\to\infty$ of solutions to this equation to a singular measure located at a minimizer, restricting ourselves to the the zero temperature limit $(\beta=0)$ and to sufficiently nice objective functions $V$.
\nc

Weak solutions of this equation are continuous curves of probability measures  $t\mapsto\rho_t$ such that
\begin{align*}
    \frac{\d}{\d t}
    \int 
    \phi(x) \d\rho_t(x)
    =
    &-
    \int
    \grad\phi(x)\cdot(x-\wkmean[\rho_t](x))
    \d\rho_t(x)\\
    &+
    \frac{\sigma^2}{2}
    \int 
    \Delta\phi(x) \abs{x-\wkmean[\rho_t](x)}^2\d\rho_t(x)
\end{align*}
holds true for all smooth and compactly supported test functions $\phi \in C^\infty_c(\nR^d)$.

Using the Leray--Schauder fixed point theorem, existence proofs for this equation without a kernel, i.e., $\kernel(x,y)=1$, were given in \cite{carrillo2018analytical,fornasier2021consensus_conv} under mild assumptions on the objective $V$ and for initial distributions with finite fourth-order moment.
Taking into account that 
\begin{align*}
    \kernel(x,y)\exp(-\beta V(y)) = 
    \exp\left(-\beta\left(V(y) - \frac{1}{\beta} \log\kernel(x,y)\right)\right)
\end{align*}
we expect that under reasonable Lipschitz-like assumptions on the logarithm of the kernel these arguments translate to our case, but we leave this for future work.
The biggest challenge is that for standard CBO $t\mapsto\wmean[\rho_t]$ is a continuous curve in $\nR^d$, whereas $t\mapsto\wkmean[\rho_t](\cdot)$ is not.
Rather, it is a curve in a space of vector fields.
This requires more sophisticated compactness arguments than the one made in \cite{fornasier2021consensus_conv,carrillo2018analytical}.

Besides existence, in the literature there exist two different approaches to proving formation of consensus around the global minimizer of $V$ for the Fokker--Planck equation \labelcref{eq:fuck_kerplunk_CBO} associated to standard CBO.
The first one was presented in \cite{carrillo2018analytical} and constitutes a two-step approach. 
First, they prove that the non-weighted standard variance
\begin{align*}
    V(\rho_t) := \int \abs{x - E(\rho_t)}^2\d \rho_t(x),\qquad\text{where }E(\rho_t) := \int y\d\rho_t(y),
\end{align*}
decreases to zero along solutions $\rho_t$ of \labelcref{eq:fuck_kerplunk_CBO}.
This obviously implies that $\rho_t$ converges to a Dirac measure $\delta_{\tilde x}$ concentrated on some point $\tilde x \in \nR^d$.
Second, the Laplace principle is invoked in order to conclude that $\tilde x$ lies close to the global minimizer of $V$ if $\beta$ is chosen sufficiently large.
This analytical approach uses heavily that the weighted mean $\wmean[\rho_t]$ for CBO \emph{does not} depend on the spatial variable~$x$ which is a linearity property that our weighted mean $\wkmean[\rho_t](x)$ does not enjoy. 
Furthermore, by design our method does in general not converge to a single Dirac mass and hence it's classical variance does not converge to zero.

A different approach is presented in \cite{fornasier2021consensus_conv} where the authors propose a more unified strategy. 
For this they fix a point $\hat{x}\in\nR^d$ (which later will be the global minimizer of $V$) and define the variance-type function
\begin{align}
    \nvar[\rho_t] := 
    \frac{1}{2}\int\abs{x-\hat{x}}^2\d\rho_t(x).
\end{align}
The authors note that $\nvar[\rho_t] = \frac12\mathcal{W}_2^2(\rho_t,\delta_{\hat{x}})$ and so convergence of the variance implies convergence of $\rho_t$ to $\delta_{\hat{x}}$ in the Wasserstein-2 distance.
They derive a differential inequality for $\nvar[\rho_t]$ which in combination with the Laplace principle allows them to show some kind of semi-convergence behavior, i.e., for every $\eps>0$ there exists $\beta>0$ such that $\nvar[\rho_t]$ decreases exponentially fast until it hits the threshold $\nvar[\rho_t]\leq\eps$.

When it comes to generalizing this approach to our setting there are two main obstacles.
First, in the case of several global minimizers $\{\hat{x}_i\st 1\leq i \leq N\}$ the Wasserstein-2 distance between $\rho_t$ and the empirical measure $\frac{1}{N}\sum_{i=1}^N\delta_{\hat{x}_i}$ does not equal $\frac1N\sum_{i=1}^N\int\abs{x-\hat{x}_i}^2\d\rho_t(x)$.
Indeed, the latter quantity is a very bad upper bound for the desired Wasserstein-2 distance since it is bounded from below by a positive number.

\rev In the following, we therefore present an alternative approach for analyzing convergence of the Fokker--Planck equation \labelcref{eq:polarized_fuck_kerplunk_CBO} which is based on choosing the Lyapunov function
\begin{align*}
    \L_\beta[\rho_t] := \frac{1}{2}\int\abs{x-\wkmean[\rho_t](x)}^2\d\rho_t(x)
\end{align*}
and the reasons for this choice will become clear in a moment.
We consider the setting of a Gaussian kernel $\kernel(x,y) := \exp\left(-\beta\frac{\abs{x-y}^2}{2\kappa^2}\right)$ of variance $\kappa^2/\beta$ for $\kappa>0$ and consider the limit as $\beta\to\infty$. 
The case of $\beta<\infty$ is then treated using the quantitative Laplace principle as, for instance, in \cite{fornasier2021consensus_conv}.
Let us assume that the support of $\rho$ equals $\nR^d$.
In this setting the Laplace principle implies that for $\beta\to\infty$ one has
\begin{align*}
    \wkmean[\rho](x) = \frac{\int y\exp\left(-\beta\left(\frac{\abs{x-y}^2}{2\kappa^2}+V(y)\right)\right)\d\rho(y)}{\int\exp\left(-\beta\left(\frac{\abs{x-y}^2}{2\kappa^2}+V(y)\right)\right)\d\rho(y)}
    \to
    \argmin_{y \in \nR^d}\frac{\abs{x-y}^2}{2\kappa^2}+V(y),
\end{align*}
where we assume that $\kappa$ is sufficiently small such that $y\mapsto\frac{\abs{x-y}^2}{2\kappa^2}+V(y)$ has a unique minimizer for every $x\in\nR^d$. 
Note that this is possible, for instance, if $V$ is $C^2$ and the smallest eigenvalue of its Hessian matrix is bounded from below.
Therefore, we consider the limiting dynamics as $\beta\to\infty$, governed by the drift field $x - p(x)$, where
\begin{align}\label{eq:proximal_operator}
    p(x) := \argmin_{y \in \nR^d}\frac{\abs{x-y}^2}{2\kappa^2}+V(y)
\end{align}
is known as the \emph{proximal operator} of $\kappa^2 V$.
Correspondingly, the Lyapunov function becomes
\begin{align*}
    \L[\rho_t] := L_\infty[\rho_t] := \frac{1}{2}\int\abs{x-p(x)}^2\d\rho_t(x)
\end{align*}
and we would like to emphasize that, using properties of the proximal operator, one has
\begin{align*}
    \L[\rho] = 0 
    \iff 
    x\in \argmin V\quad\text{for $\rho$-almost every }x\in\nR^d.    
\end{align*}
The following analysis treats the model case where the loss function $V$ is strongly convex, sufficiently smooth, and additionally satisfies a certain derivative bound in case we consider a Fokker--Planck equation with non-vanishing diffusion term.
While this is just the first step toward a comprehensive analysis of polarized CBO method for much larger classes functions, the following results introduce a set of important techniques which---we believe---will be a cornerstone for future analysis.
The core ideas of the following analysis are based on discussions of the first author with Massimo Fornasier and Oliver Tse and an extension to non-convex loss functions and $\beta<\infty$ is ongoing work.

We start with the following decay property of $\L[\rho_t]$ for solutions of the associated Fokker--Planck equation:
\begin{proposition}[Exponential decay of the Lypapunov function]\label[proposition]{prop:exp_cvgc}
    Let $t\mapsto\rho_t$ be a weak solution of the Fokker--Planck equation
    \begin{align}
        \partial_t \rho_t(x) = \div\Big(\rho_t(x)(x-p(x))\Big) + \frac{\sigma^2}{2}\Delta\left(\rho_t(x)\abs{x - p(x)}^2\right).
    \end{align}
    We pose the following assumptions on the loss function $V$:
    \begin{itemize}
        \item If $\sigma=0$ assume that $V\in C^2(\nR^d)$ and there exists $\mu>0$ such that
        \begin{align*}
            D^2 V \succcurlyeq \mu\mathbb I    
        \end{align*}
        \item If $\sigma>0$ assume additionally that $V\in C^3(\nR^d)$ and satisfies
        \begin{align*}
        \sup_{x\in\nR^d}\abs{Dv(x)}\abs{D^3V(x)}<\infty.
        \end{align*} 
    \end{itemize}
    Then there exists constants $C_1,C_2>0$ such that, if $\sigma<C_1$, it holds for all $t>0$ that
    \begin{align}
        \frac{\d}{\d t}\L[\rho_t] \leq -C_2\L[\rho_t] 
    \end{align}
    and consequently
    \begin{align}
        \L[\rho_t] \leq \L[\rho_0]\exp(-C_2 t).
    \end{align}
\end{proposition}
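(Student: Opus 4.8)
The plan is to differentiate the Lyapunov functional along the flow by testing the weak formulation of the Fokker--Planck equation against $\psi(x):=\frac12\abs{x-p(x)}^2$, which gives
\begin{align*}
\frac{\d}{\d t}\L[\rho_t] = -\int \grad\psi(x)\cdot\big(x-p(x)\big)\d\rho_t(x) + \frac{\sigma^2}{2}\int\Delta\psi(x)\,\abs{x-p(x)}^2\d\rho_t(x).
\end{align*}
Since $\psi\notin C^\infty_c(\nR^d)$, I would first justify this identity by a truncation argument, using that strong convexity of $V$ makes $p=\mathrm{prox}_{\kappa^2 V}$ globally Lipschitz, so that $\psi$ has at most quadratic and $\grad\psi,\Delta\psi$ at most linear growth, and that the relevant moments of $\rho_t$ stay finite along the flow. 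The two integrands are then evaluated from standard properties of the proximal operator: the optimality condition reads $x-p(x)=\kappa^2\grad V(p(x))$, and implicit differentiation of it yields $Dp(x)=(\mathbb I+\kappa^2 D^2V(p(x)))^{-1}$, a symmetric matrix whose eigenvalues lie in $(0,(1+\mu\kappa^2)^{-1}]$ because $D^2V\succcurlyeq\mu\mathbb I$.

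Writing $g(x):=x-p(x)$, the drift term uses $\grad\psi(x)=(\mathbb I-Dp(x))g(x)$, so that
\begin{align*}
\grad\psi(x)\cdot g(x) = \abs{g(x)}^2 - g(x)^T Dp(x)\,g(x) \ge \Big(1-\tfrac{1}{1+\mu\kappa^2}\Big)\abs{g(x)}^2 = \tfrac{\mu\kappa^2}{1+\mu\kappa^2}\abs{g(x)}^2
\end{align*}
by the eigenvalue bound on $Dp$. Integrating against $\rho_t$ bounds the drift contribution by $-\frac{2\mu\kappa^2}{1+\mu\kappa^2}\L[\rho_t]$, which already proves the proposition in the case $\sigma=0$, with $C_2=\frac{2\mu\kappa^2}{1+\mu\kappa^2}$.

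For $\sigma>0$ I must in addition control $\Delta\psi$. A direct computation gives $\Delta\psi=\norm{\mathbb I-Dp}_F^2 - g\cdot\Delta p$, with $\norm{\cdot}_F$ the Frobenius norm; the first term is at most $d$ since the eigenvalues of $\mathbb I-Dp$ lie in $[0,1)$. For the second term I would differentiate $Dp=(\mathbb I+\kappa^2 D^2V(p(x)))^{-1}$ once more and use $\norm{Dp}\le1$ and $\norm{(\mathbb I+\kappa^2 D^2V)^{-1}}\le1$ to obtain $\abs{\Delta p(x)}\le c(\kappa,d)\abs{D^3V(p(x))}$; together with $\abs{g(x)}=\kappa^2\abs{\grad V(p(x))}$ and the hypothesis $M:=\sup_x\abs{\grad V(x)}\abs{D^3V(x)}<\infty$ this gives $\abs{g(x)\cdot\Delta p(x)}\le c(\kappa,d)\kappa^2 M$. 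Hence $\Delta\psi$ is bounded above by a constant $c'(\kappa,d)$, and since $\abs{g}^2\ge0$ the diffusion term is at most $\frac{\sigma^2}{2}c'(\kappa,d)\int\abs{g}^2\d\rho_t=\sigma^2 c'(\kappa,d)\L[\rho_t]$.

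Combining the two estimates gives $\frac{\d}{\d t}\L[\rho_t]\le -\big(\frac{2\mu\kappa^2}{1+\mu\kappa^2}-c'(\kappa,d)\sigma^2\big)\L[\rho_t]$; choosing $C_1$ (depending only on $\mu,\kappa,d$) so that $c'(\kappa,d)\sigma^2\le\frac{\mu\kappa^2}{1+\mu\kappa^2}$ for $\sigma<C_1$, the bracket is bounded below by $C_2:=\frac{\mu\kappa^2}{1+\mu\kappa^2}>0$, which proves the differential inequality, and Grönwall's lemma yields $\L[\rho_t]\le\L[\rho_0]e^{-C_2 t}$. I expect the main obstacle to be the $\sigma>0$ part: rigorously justifying the non-compactly-supported test function $\psi$ together with the growth and moment bookkeeping for $\grad\psi$ and $\Delta\psi$, and deriving the bound on $\Delta p$ — i.e.\ on the second derivatives of the proximal operator — with explicit enough dependence on $\kappa$ and $d$ that $C_1$ remains genuinely positive; the hypothesis $\sup_x\abs{\grad V(x)}\abs{D^3V(x)}<\infty$ is exactly what keeps the third-order term $g\cdot\Delta p$ bounded uniformly in $x$.
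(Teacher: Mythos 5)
Your proposal is correct and follows essentially the same route as the paper's proof: differentiate $\L[\rho_t]$ against the test function $\tfrac12\abs{x-p(x)}^2$, use the optimality condition $x-p(x)=\kappa^2\nabla V(p(x))$ and $Dp(x)=\left(\mathbb I+\kappa^2 D^2V(p(x))\right)^{-1}$ to get the spectral bound $\mathbb I-Dp(x)\succcurlyeq\frac{\kappa^2\mu}{1+\kappa^2\mu}\mathbb I$ for the drift term, bound the Laplacian by $d$ plus a term involving second derivatives of $p$ which is uniformly controlled through the assumption $\sup_x\abs{\nabla V}\abs{D^3V}<\infty$, and conclude by Grönwall for $\sigma$ small. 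Your additional points (truncation to justify the non-compactly-supported test function, and writing the Laplacian bound via $\norm{\mathbb I-Dp}_F^2\leq d$ rather than the paper's trace manipulation) are only minor variations in bookkeeping, not a different argument.
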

\begin{proof}
    Differentiating $\L[\rho_t]$ in time and using the weak form of the PDE implies
    \begin{align}\label{eq:derivative_L}
    \begin{split}
        \frac{\d}{\d t}\L[\rho_t]
        =
        -\int\nabla\frac{1}{2}\abs{x-p(x)}^2\cdot (x-p(x))\d\rho_t(x)
        +
        \frac{\sigma^2}{4}
        \int \Delta\abs{x-p(x)}^2\abs{x-p(x)}^2\d\rho_t(x).
    \end{split}
    \end{align}
    We continue by computing the derivatives that appear in this expression. 
    First, it holds
    \begin{align}\label{eq:del_i_square}
        \partial_i \frac{1}{2}\abs{x-p(x)}^2
        =
        \partial_i \sum_j\frac{1}{2}\abs{x_j-p_j(x)}^2
        =
        \sum_j(\delta_{ij}-\partial_i p_j(x))(x_j-p_j(x)).
    \end{align}
    Next we observe that by definition of the proximal operator $p(x)$ it holds
   \begin{align}\label{eq:OC}
        0 = p(x) - x + \kappa^2\nabla V(p(x)).
   \end{align}
   Differentiating this equation yields 
   \begin{align}\label{eq:diff_OC}
        0 = Dp(x) - \mathbb I + \kappa^2 D^2V(p(x)) Dp(x)
   \end{align}
   and therefore
   \begin{align}\label{eq:jacobian_prox}
        Dp(x) &= \left(\mathbb I + \kappa^2 D^2V(p(x))\right)^{-1},
   \end{align}
   which is a symmetric matrix.
   Using \labelcref{eq:del_i_square} we get
   \begin{align*}
       -\nabla\frac{1}{2}\abs{x-p(x)}^2\cdot(x-p(x))
       =
       -(\mathbb I - Dp(x))(x-p(x)) \cdot (x-p(x))
   \end{align*}
   For estimating this expression from above it suffices to bound the eigenvalues of $M:=\mathbb I - Dp(x)$ from below. 
   By assumption we have $D^2 V \succcurlyeq \mu\mathbb I$, which implies that $M\succcurlyeq\left(1-\frac{1}{1+\kappa^2\mu}\right)\mathbb I = \frac{\kappa^2\mu}{1+\kappa^2\mu}\mathbb I$ and therefore we can bound the first term in \labelcref{eq:derivative_L}
   \begin{align}\label{eq:bound_drift}
       -\nabla\frac{1}{2}\abs{x-p(x)}^2\cdot(x-p(x))
       \leq
       -\frac{\kappa^2\mu}{1+\kappa^2\mu}
       \abs{x-p(x)}^2.
   \end{align}
   If $\sigma=0$ we can already conclude the proof, using Gronwall's inequality.
   For $\sigma>0$ we have to bound the second term in \labelcref{eq:derivative_L}, coming from the diffusion.
   Using \labelcref{eq:del_i_square} and the product rule it follows
   \begin{align}\label{eq:del_del_i_square}
   \begin{split}
       \partial_i^2\frac{1}{2}\abs{x-p(x)}^2
        =
        -\sum_j\partial_i^2 p_j(x)(x_j-p_j(x)) 
        +
        \sum_j(\delta_{ij}-\partial_i p_j(x))(\delta_{ij}-\partial_i p_j(x)).
   \end{split}
   \end{align}
   Consequently, we obtain
   \begin{align*}
       \Delta\frac12\abs{x-p(x)}^2
       &=
       \sum_i \partial_i^2\frac{1}{2}\abs{x-p(x)}^2
       \\
       &=
       -\sum_{i,j}\partial_i^2 p_j(x)(x_j-p_j(x)) + \sum_{i,j}(\delta_{ij}-\partial_i p_j(x))(\delta_{ij}-\partial_i p_j(x))
       \\
       &=
       -\sum_{i,j}\partial_i^2 p_j(x)(x_j-p_j(x))
       +
       d
       -2\sum_{i}\partial_i p_i(x) 
       +
       \sum_{i,j}
       \partial_i p_j(x)
       \partial_i p_j(x)
       \\
       &=
       -\sum_{i,j}\partial_i^2 p_j(x)(x_j-p_j(x))
       +
       d
       -2\tr(Dp(x))
       +
       \tr(Dp(x)Dp(x)^T)
       \\
       &=
       -\sum_{i,j}\partial_i^2 p_j(x)(x_j-p_j(x))
       +
       d
       +
       \tr(Dp(x)(Dp(x)^T-2\mathbb I)).
   \end{align*}
   It also holds $0\preccurlyeq Dp(x) \preccurlyeq \mathbb I$ and therefore $2\mathbb I - Dp(x)\succcurlyeq\mathbb I$.
   This allows us to estimate
   \begin{align*}
       \tr(Dp(x)(Dp(x)^T-2\mathbb I))
       =
       -\tr(Dp(x)(2\mathbb I-Dp(x)))
       \leq 
       -\tr(Dp(x)) \leq 0.
   \end{align*}
   Going back to the previous formula for the Laplacian, we obtain the estimate
   \begin{align}\label{eq:delta_2}
       \Delta\frac12\abs{x-p(x)}^2
       \leq
       -\sum_{i,j}\partial_i^2 p_j(x)(x_j-p_j(x))
       +
       d
   \end{align}
   and it remains to bound the first term. 
   For this we need second derivatives of $p(x)$. 
   Writing \labelcref{eq:diff_OC} in coordinates gives
   \begin{align*}
       \partial_i p_j(x) = \delta_{ij} - \kappa^2 \sum_{r}\partial_{r}\partial_j V(p(x))\partial_i p_r(x).
   \end{align*}
   Taking another derivative with respect to the $i$-th variable and using the preduct rule yields
   \begin{align*}
       \partial_i^2 p_j(x) = -\kappa^2\sum_r\partial_r\partial_j V(p(x))\partial_i^2 p_r(x)
       -
       \kappa^2
       \sum_{r,s}\partial_s\partial_r\partial_jV(p(x))\partial_i p_s(x)\partial_j p_r(x).
   \end{align*}
   We have to solve this equation for the second derivatives of $p$ for which we define the matrix $A=A_{ij} := \partial_i^2 p_j(x)$ and the matrix $B = B_{ij} := \sum_{r,s}\partial_s\partial_r\partial_jV(p(x))\partial_i p_s(x)\partial_j p_r(x)$.
   Then the previous equation is equivalent to the linear system
   \begin{align*}
       A = -\kappa^2 A D^2V(p(x)) - \kappa^2 B
   \end{align*}
   which is solved by
   \begin{align}\label{eq:A}
       A = -\kappa^2 B (\mathbb I + \kappa^2 D^2V(p(x)))^{-1}.
   \end{align}
   Using the definition of $A$ and \labelcref{eq:delta_2} we get
   \begin{align*}
       \Delta\frac12\abs{x-p(x)}^2
       \leq 
       -\kappa^2
       \sum_{i,j}
       A_{ij}\partial_j V(p(x)) + d
   \end{align*}
   and it remains to uniformly bound the first term.
      
   Using \labelcref{eq:A}, the definition of $B$ as well as \labelcref{eq:jacobian_prox}, we get the following estimate in terms of the matrix/tensor norms
   \begin{align*}
       \Delta\frac12\abs{x-p(x)}^2
       \leq 
       C_d
       \kappa^4
       \abs{D^3V(p(x))}\abs{\nabla V(p(x))}
       \abs{\left(\mathbb I+\kappa^2 D^2V(p(x))\right)^{-1}}^3
       + d,
   \end{align*}
   where $C_d$ is a dimensional constant.
   By assumption the right hand side is uniformly bounded by some constant $C>0$ and going back to \labelcref{eq:derivative_L} we obtain
   \begin{align*}
       \frac{\d}{\d t}\L[\rho_t] 
       &\leq 
       -\frac{\kappa^2\mu}{1+\kappa^2\mu}\int\abs{x-p(x)}^2\d\rho_t(x)
       +
       \frac{\sigma^2}{2}
       C
       \int \abs{x-p(x)}^2\d\rho_t(x)
       \\
       &=
       -\left(\frac{2\kappa^2\mu}{1+\kappa^2\mu}-\sigma^2 C\right)\L[\rho_t].
   \end{align*}
   Since $\mu>0$, we can choose $\sigma>0$ sufficiently small---for instance, $\sigma^2<\frac{1}{C}\frac{\kappa^2\mu}{1+\kappa^2\mu}$---such that the brackets are strictly positive. 
   Then we can conclude the proof using Gronwall's inequality.
\end{proof}
\begin{example}[The one-dimensional case]
   In the case of one spatial dimension $d=1$, we can bound the Laplacian $\Delta\frac12\abs{x-p(x)}^2$---which in this case is just the second derivative---more accurately. 
   In this case $B=V'''(p(x))(p'(x))^2$ and hence $A=-\kappa^2\frac{V'''(p(x))(p'(x))^2}{1+\kappa^2V''(p(x))}$.
   Plugging in $p'(x)=\frac{1}{1+\kappa^2V''(p(x))}$ we obtain
   \begin{align*}
       A = -\kappa^2\frac{V'''(p(x))}{(1+\kappa^2V''(p(x)))^3}.
   \end{align*}
   Hence, we obtain the estimate
   \begin{align*}
       \Delta\frac12\abs{x-p(x)}^2
       \leq 
       \kappa^4
       \frac{V'''(p(x))V'(p(x))}{(1+\kappa^2V''(p(x)))^3}
       + 1.
   \end{align*}
   We give three examples of loss functions $V$ which satisfy all assumptions of \cref{prop:exp_cvgc}.
   The first one is the quadratic loss $V_1(x)=ax^2+bx+c$ with $a>0,b,c\in\nR$ for which it holds $V_1'''=0$ and hence the Laplacian is bounded by $1$.
   The second one is $V_2(x)=x^2+\ln(x+\sqrt{1+x^2})$ for which we have (the loose bound) $\abs{V'''V'}\leq 1$ and so a valid bound for the Laplacian is $\kappa^4+1$.
   The third loss is $V_3(x)=x^2+x+1-(x+1)\ln(x+1)$ for $x\geq 0$, extended to an even function on $\nR$. 
   Also here one has (the loose) bound $\abs{V'''V'}\leq 1$ and again a valid bound for the Laplacian is $\kappa^4+1$.
\end{example}
\rev
Next we prove a compactness property for measures for which $\L[\rho_t]$ is uniformly bounded in time.
For this we also require that the loss $V$ has a Lipschitz continuous gradient, which is a standard assumption in nonlinear optimization.
\begin{proposition}\label[proposition]{prop:compactness}
    Let $V \in C^2(\nR^d)$ satisfy $\mu\mathbb I\preccurlyeq D^2 V \preccurlyeq L\mathbb I$ for some $0<\mu\leq L<\infty$ and assume that
    \begin{align*}
        \sup_{t>0}\L[\rho_t] < \infty.
    \end{align*}
    Then there exists a subsequence $(\rho_{t_n})_{n\in\N}$ for $t_n\to\infty$ as $n\to\infty$ which converges to a probability measure $\rho_\infty$ in the Wasserstein-2 distance.
\end{proposition}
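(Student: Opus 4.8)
The plan is to first convert the hypothesis $\sup_{t>0}\L[\rho_t]<\infty$ into a uniform-in-time bound on the second moments of $\rho_t$, which yields tightness and hence a narrowly convergent subsequence, and then to upgrade narrow convergence to convergence in $\mathcal{W}_2$ by propagating a fourth moment along the flow. The crucial structural input is that, under strong convexity of $V$, the proximal operator $p$ is a strict contraction fixing the unique minimizer of $V$.

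\emph{From $\L$ to second moments.} Since $D^2V\succcurlyeq\mu\mathbb I$, the function $V$ has a unique minimizer $x^\star$ with $\nabla V(x^\star)=0$, and the optimality condition \labelcref{eq:OC} gives $p(x^\star)=x^\star$; moreover \labelcref{eq:jacobian_prox} shows $0\preccurlyeq Dp(x)\preccurlyeq\tfrac{1}{1+\kappa^2\mu}\mathbb I$, so $p$ is Lipschitz with constant $\tfrac{1}{1+\kappa^2\mu}<1$ and $\abs{p(x)-x^\star}\leq\tfrac{1}{1+\kappa^2\mu}\abs{x-x^\star}$. Using \labelcref{eq:OC} once more, $x-p(x)=\kappa^2\nabla V(p(x))$, and strong monotonicity of $\nabla V$ yields $\abs{x-p(x)}=\kappa^2\abs{\nabla V(p(x))}\geq\kappa^2\mu\abs{p(x)-x^\star}$. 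Combining these with the triangle inequality gives the pointwise bound $\abs{x-x^\star}\leq(1+\tfrac{1}{\kappa^2\mu})\abs{x-p(x)}$, whence
\begin{align*}
  \int\abs{x-x^\star}^2\d\rho_t\leq 2\Big(1+\tfrac{1}{\kappa^2\mu}\Big)^2\L[\rho_t]\leq M:=2\Big(1+\tfrac{1}{\kappa^2\mu}\Big)^2\sup_{s>0}\L[\rho_s]<\infty.
\end{align*}
By Markov's inequality $\rho_t(\{\abs{x-x^\star}>R\})\leq M/R^2$ for all $t$, so $\{\rho_t\}_{t>0}$ is tight; Prokhorov's theorem then provides $t_n\to\infty$ and $\rho_\infty$ with $\rho_{t_n}\wto\rho_\infty$ narrowly, and lower semicontinuity of $\rho\mapsto\int\abs{x-x^\star}^2\d\rho$ under narrow convergence gives $\rho_\infty\in\P_2(\nR^d)$.

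\emph{Upgrading to $\mathcal{W}_2$.} Narrow convergence together with a mere second moment bound does not imply $\mathcal{W}_2$-convergence, since mass could escape to infinity; it therefore remains to show that $x\mapsto\abs{x-x^\star}^2$ is uniformly integrable along $(\rho_{t_n})$. To this end set $m_4(t):=\int\abs{x-x^\star}^4\d\rho_t$, finite at $t=0$ by the standard finite-fourth-moment assumption on the initial datum $\rho_0$, and test the Fokker--Planck equation with (a truncation of) $\phi(x)=\abs{x-x^\star}^4$, for which $\nabla\phi=4\abs{x-x^\star}^2(x-x^\star)$ and $\Delta\phi=4(d+2)\abs{x-x^\star}^2$. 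The estimates from the previous step give $(x-x^\star)\cdot(x-p(x))\geq\tfrac{\kappa^2\mu}{1+\kappa^2\mu}\abs{x-x^\star}^2$ and $\abs{x-p(x)}\leq\tfrac{2+\kappa^2\mu}{1+\kappa^2\mu}\abs{x-x^\star}$, so after passing to the limit in the truncation (the second moment bound controls the cutoff errors) one obtains
\begin{align*}
  \frac{\d}{\d t}m_4(t)\leq\Big(-\tfrac{4\kappa^2\mu}{1+\kappa^2\mu}+2(d+2)\big(\tfrac{2+\kappa^2\mu}{1+\kappa^2\mu}\big)^2\sigma^2\Big)m_4(t).
\end{align*}
For $\sigma$ sufficiently small — within the same regime already required in \cref{prop:exp_cvgc} — the bracket is strictly negative, hence $m_4(t)\leq m_4(0)$ for all $t>0$. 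Consequently $\int_{\abs{x-x^\star}>R}\abs{x-x^\star}^2\d\rho_t\leq R^{-2}m_4(0)$ uniformly in $t$, which is exactly the uniform integrability needed. Combining this with $\rho_{t_n}\wto\rho_\infty$ and the characterization of Wasserstein-$2$ convergence as narrow convergence plus convergence of second moments, we conclude $\mathcal{W}_2(\rho_{t_n},\rho_\infty)\to0$.

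The main obstacle is this final upgrade: for families with only bounded second moments, narrow convergence does not imply $\mathcal{W}_2$-convergence, so one genuinely has to exploit the dissipativity of the drift $x\mapsto x-p(x)$ — which is where strong convexity of $V$ and the smallness of $\sigma$ re-enter — to control a higher moment uniformly in time. The remaining technical points, namely making $\phi$ an admissible test function by truncation and bounding the diffusion contribution, are routine once the second moment bound is in hand.
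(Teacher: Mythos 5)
Your first step is sound and essentially parallel to the paper's: strong convexity gives $p(x^\ast)=x^\ast$, the contraction estimate $\abs{p(x)-x^\ast}\le\tfrac{1}{1+\kappa^2\mu}\abs{x-x^\ast}$, and $\abs{x-p(x)}=\kappa^2\abs{\nabla V(p(x))}\ge\kappa^2\mu\abs{p(x)-x^\ast}$, whence a uniform second-moment bound and tightness (the paper runs the analogous estimate for the push-forwards $p_\sharp\rho_t$ instead of $\rho_t$ itself). The problem is your upgrade from narrow to Wasserstein-2 convergence. The proposition assumes nothing beyond $V\in C^2(\nR^d)$, $\mu\mathbb I\preccurlyeq D^2V\preccurlyeq L\mathbb I$ and $\sup_{t>0}\L[\rho_t]<\infty$; in particular it assumes neither a finite fourth moment of $\rho_0$ nor any smallness of $\sigma$, and the paper's proof never uses the Fokker--Planck equation at all---it is a purely static statement about measures with $\L$ uniformly bounded. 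Your upgrade hinges on propagating $m_4(t)=\int\abs{x-x^\ast}^4\d\rho_t(x)$ along the flow, which (i) imports the extra hypothesis $m_4(0)<\infty$, (ii) requires the bracket $-\tfrac{4\kappa^2\mu}{1+\kappa^2\mu}+2(d+2)\bigl(\tfrac{2+\kappa^2\mu}{1+\kappa^2\mu}\bigr)^2\sigma^2$ to be negative, which is a different, dimension-dependent smallness condition from the one in \cref{prop:exp_cvgc} (whose constant involves $\abs{D^3V}\,\abs{\nabla V}$), so the claim that you are ``within the same regime'' is not justified, and (iii) needs the unbounded test function $\abs{x-x^\ast}^4$ to be admissible; the cutoff errors in that truncation involve the fourth moment itself, not the second moment, so this step is not as routine as asserted and requires a Gronwall/Fatou argument on truncated moments. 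As written, you therefore prove a weaker statement than the proposition, under hypotheses it does not contain.

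The paper's route is different and avoids all of this: from $\sup_t\L[\rho_t]<\infty$ it deduces $\sup_t\int\abs{p(x)}^2\d\rho_t(x)<\infty$, extracts a subsequence along which the push-forwards $p_\sharp\rho_{t_n}$ converge in $W_2$ to some $\sigma_\infty$, and then transfers this back to $\rho_{t_n}$ using that $p$ is bi-Lipschitz with explicit inverse $p^{-1}(y)=y+\kappa^2\nabla V(y)$, Lipschitz with constant $1+\kappa^2L$ precisely because $D^2V\preccurlyeq L\mathbb I$---this upper Hessian bound, which your argument never uses, is exactly what it is there for. Pushing optimal couplings forward by $p^{-1}\times p^{-1}$ gives $W_2(\rho_{t_n},p^{-1}_\sharp\sigma_\infty)\le(1+\kappa^2L)\,W_2(p_\sharp\rho_{t_n},\sigma_\infty)\to0$. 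To your credit, the subtlety you flag is real: bounded second moments plus narrow convergence do not yield $W_2$-convergence, and the paper's own extraction of a $W_2$-convergent subsequence of $(p_\sharp\rho_{t_n})$ also tacitly uses more than a second-moment bound. But the natural repair is to strengthen that compactness step (uniform integrability of the second moments of the push-forwards), not to re-introduce the dynamics with additional assumptions on $\rho_0$ and $\sigma$ that the statement does not grant.
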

\begin{proof}
    Let $x^\ast:=\argmin V$.
    By assumption it holds
    \begin{align*}
        \frac{\mu}{2}\abs{p(x)-x^\ast}^2
        +
        \langle 
        \nabla V(p(x)), p(x)- x^\ast\rangle 
        +
        V(p(x))
        \leq 
        V(x^\ast) \leq V(p(x))
    \end{align*}
    which implies
    \begin{align*}
        \frac{\mu\kappa^2}{2}\abs{p(x)-x^\ast}^2
        \leq 
        \kappa^2\langle 
        \nabla V(p(x)), x^\ast - p(x)\rangle.
    \end{align*}
    By definition of $p(x)$ it holds $\kappa^2\nabla V(p(x))=x-p(x)$ and therefore 
    \begin{align*}
        \frac{\mu\kappa^2}{2}\abs{p(x)-x^\ast}^2
        \leq
        \langle x-p(x),x^\ast-p(x)\rangle
        \leq 
        \abs{x-p(x)}\abs{x^\ast-p(x)}.
    \end{align*}
    Hence, we have showed that
    \begin{align*}
        \abs{p(x)-x^\ast} \leq \frac{2}{\mu\kappa^2} \abs{x-p(x)}.
    \end{align*}
    Using this as well as the inequality $\abs{a+b}^2\leq 2\abs{a}^2+2\abs{b}^2$ we obtain
    \begin{align}\label{eq:moment_bound}
    \begin{split}
        \int\abs{p(x)}^2\d\rho_t(x)
        &\leq 
        2\int\abs{x^\ast-p(x)}^2\d\rho_t(x)
        +
        2\abs{x^\ast}^2
        \\
        &\leq 
        \frac{8}{\mu^2\kappa^4}
        \int\abs{x-p(x)}^2\d\rho_t(x)
        +
        2\abs{x^\ast}^2.
    \end{split}        
    \end{align}
    By assumption one has
    \begin{align*}
        \sup_{t>0}\int\abs{x-p(x)}^2\d\rho_t(x) < \infty
    \end{align*}
    which, together with change of variables and \labelcref{eq:moment_bound}, implies
    \begin{align*}
        \sup_{t>0}\int\abs{y}^2\d(p_\sharp\rho_t)(y)=
        \sup_{t>0}\int\abs{p(x)}^2\d\rho_t(x)<\infty.
    \end{align*}
    Therefore, by compactness a subsequence $(p_\sharp\rho_{t_n})_{n\in\N}$ of the push-forward measures converges to some probability measure $\sigma_\infty$ in the Wasserstein-2 distance as $n\to\infty$.
    
    First, we claim that under our assumptions the map $x\mapsto p(x)$ is a biLipschitz homeomorphism with Lipschitz continuous inverse $y\mapsto p^{-1}(y):=y+\kappa^2\nabla V(y)$.
    The fact that these two maps are Lipschitz continuous is obvious, noting that the proximal operator of a convex function is $1$-Lipschitz and the map $p^{-1}$ is Lipschitz because $\nabla V$ is Lipschitz.
    Furthermore, we observe that
    \begin{align*}
        p^{-1}(p(x)) = p(x) + \kappa^2\nabla V(p(x)) = x
    \end{align*}
    by definition of the proximal operator so $p^{-1}$ is a left inverse. 
    To see that also $p(p^{-1}(x))=x$ we note that
    \begin{align*}
        p(p^{-1}(x)) = \argmin_{y\in\nR^d}\frac{\abs{y-x-\kappa^2\nabla V(x)}^2}{2\kappa^2}+V(y).
    \end{align*}
    This is a strongly convex optimization problem and hence the optimality conditions
    \begin{align*}
        0 = y-x-\kappa^2\nabla V(x) + \kappa^2\nabla V(y)
    \end{align*}
    are necessary and sufficient. 
    They have the (unique) solution $y=x$ and therefore $p(p^{-1}(x))=x$ which shows that $p^{-1}$ is truely the inverse of $p$.
    
    Next, we claim that the measures $\rho_{t_n}$ converge to $\rho_\infty := p^{-1}_\sharp\sigma_\infty$ in the Wasserstein-2 distance as $n\to\infty$.
    To see this, let $\pi_n$ denote an optimal coupling of $p_\sharp\rho_{t_n}$ and $\sigma_\infty$ such that
    \begin{align*}
        W_2(p_\sharp\rho_{t_n},\sigma_\infty)
        =
        \iint\abs{x-y}^2\d\pi_n(x,y).
    \end{align*}
    We define $\tilde\pi_n := (p^{-1}\times p^{-1})_\sharp\pi_n$ and first argue that this a coupling of $\rho_{t_n}$ and $\rho_\infty$.
    To see this, we compute
    \begin{align*}
        \tilde\pi_n(A\times\nR^d)
        =\pi_n(p(A)\times p(\nR^d))
        =\pi_n(p(A)\times \nR^d)
        =
        p_\sharp\rho_{t_n}(p(A))
        =
        \rho_{t_n}(A)
    \end{align*}
    for every Borel set $A\subset\nR^d$, where we used that $p$ is invertible and that the first marginal of $\pi_n$ is $p_\sharp\rho_{t_n}$. 
    Hence, the first marginal of $\tilde\pi_n$ is $\rho_{t_n}$.
    Similarly, the second marginal can be shown to be $\rho_\infty$ and hence $\pi_n$ is a coupling.    
    We therefore obtain
    \begin{align*}
        W_2(\rho_{t_n},\rho_\infty)
        &\leq 
        \iint\abs{x-y}^2\d\tilde\pi_n(x,y)
        \\
        &=
        \iint\abs{p^{-1}(x)-p^{-1}(y)}^2\d\pi_n(x,y)
        \\
        &\leq 
        (1+\kappa^2 L)^2
        \iint\abs{x-y}^2\d\pi_n(x,y)
        \\
        &=
        W_2(p_\sharp\rho_{t_n},\sigma_\infty)\to 0
        ,\qquad
        \text{as }n\to\infty,
    \end{align*}
    where we used the Lipschitzness of $\nabla V$ which implies Lipschitzness of $p^{-1}$.
\end{proof}
\begin{theorem}\label[theorem]{thm:convergence_W_2}
Under the conditions of \cref{prop:exp_cvgc,prop:compactness} and assuming $\L[\rho_0]=0$ it holds that
\begin{align*}
    W_2(\rho_t,\delta_{x^\ast}) \to 0\quad\text{as }t\to\infty,
\end{align*}
where $x^*:=\argmin V$.
\end{theorem}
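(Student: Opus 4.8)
The statement is essentially a corollary of \cref{prop:exp_cvgc,prop:compactness}, and the plan is to combine their conclusions with one pointwise estimate that already appears inside the proof of \cref{prop:compactness}. Recall that there, from $\mu$-strong convexity of $V$ together with the optimality relation $\kappa^2\nabla V(p(x)) = x - p(x)$, one obtains $\abs{p(x)-x^\ast}\le\frac{2}{\mu\kappa^2}\abs{x-p(x)}$ for every $x\in\nR^d$, hence $\abs{x-x^\ast}\le\bigl(1+\frac{2}{\mu\kappa^2}\bigr)\abs{x-p(x)}$ by the triangle inequality. First I would integrate this against $\rho_t$: since the unique transport plan between $\rho_t$ and $\delta_{x^\ast}$ is the product measure, one has $W_2(\rho_t,\delta_{x^\ast})^2 = \int\abs{x-x^\ast}^2\d\rho_t(x)$, and therefore
\begin{align*}
    W_2(\rho_t,\delta_{x^\ast})^2 \;\le\; 2\Bigl(1+\tfrac{2}{\mu\kappa^2}\Bigr)^2\,\L[\rho_t].
\end{align*}
Since $\L[\rho_t]\le\L[\rho_0]$ by \cref{prop:exp_cvgc}, the right-hand side is finite, so the Wasserstein distance to $\delta_{x^\ast}$ is well defined; plugging in the decay estimate $\L[\rho_t]\le\L[\rho_0]\exp(-C_2 t)$ then gives $W_2(\rho_t,\delta_{x^\ast})^2\to 0$. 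Under the stated hypothesis $\L[\rho_0]=0$ this is immediate, since then $\rho_0=\delta_{x^\ast}$ is already the stationary solution; more generally the same computation yields the quantitative rate $W_2(\rho_t,\delta_{x^\ast})^2 = O(e^{-C_2 t})$ for any datum with $\L[\rho_0]<\infty$.

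If one prefers to route the argument explicitly through \cref{prop:compactness}, as the statement suggests, I would argue as follows. That proposition applies because $\sup_{t>0}\L[\rho_t]\le\L[\rho_0]<\infty$, so there is a sequence $t_n\to\infty$ with $\rho_{t_n}\to\rho_\infty$ in $W_2$. The integrand $x\mapsto\abs{x-p(x)}^2$ is continuous and has at most quadratic growth---indeed $\abs{x-p(x)}=\kappa^2\abs{\nabla V(p(x))-\nabla V(x^\ast)}\le\kappa^2 L\,\abs{p(x)-x^\ast}\le\kappa^2 L\,\abs{x-x^\ast}$ because $\nabla V$ is $L$-Lipschitz and $p$ is $1$-Lipschitz---so $W_2$-convergence forces $\L[\rho_\infty]=\lim_n\L[\rho_{t_n}]=0$. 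By the characterization $\L[\rho]=0\iff x\in\argmin V=\{x^\ast\}$ for $\rho$-a.e.\ $x$, this means $\rho_\infty=\delta_{x^\ast}$. Arguing as in the proof of \cref{prop:compactness}, every subsequence of $(\rho_t)_{t>0}$ has a further subsequence converging in $W_2$ to the same limit $\delta_{x^\ast}$, so the standard subsequence criterion in the metric space $(\mathcal{P}_2(\nR^d),W_2)$ upgrades this to $W_2(\rho_t,\delta_{x^\ast})\to 0$ as $t\to\infty$.

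I do not expect a genuine obstacle here; the work is really carried by the two preceding propositions. The only points requiring attention are (i) verifying that the Wasserstein distance to $\delta_{x^\ast}$ is finite, which the quadratic bound above settles, and (ii), if one insists on the compactness route, establishing \emph{continuity} (rather than mere lower semicontinuity) of $\L$ along $W_2$-convergent sequences and then passing from subsequential to full convergence. The direct route through the pointwise inequality $\abs{x-x^\ast}\le(1+\tfrac{2}{\mu\kappa^2})\abs{x-p(x)}$ bypasses (ii) entirely and, as a bonus, produces the explicit exponential rate.
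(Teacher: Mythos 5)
Your proposal is correct, and your second route is essentially the paper's own proof: apply \cref{prop:compactness} under the uniform-in-time bound on $\L[\rho_t]$, identify the subsequential limit through (semi)continuity of $\L$ together with the characterization $\L[\rho]=0$ iff $\rho$ is concentrated on $\argmin V=\{x^\ast\}$, and upgrade to convergence of the full curve by uniqueness of the limit. (The paper gets by with mere lower semicontinuity of $\L$ under weak convergence, which is slightly lighter than your continuity-along-$W_2$ argument, but both are valid; your handling of the subsequence criterion is, if anything, more explicit than the paper's one-line appeal to uniqueness.) Your first, direct route is genuinely different and in fact stronger: integrating the pointwise bound $\abs{x-x^\ast}\le\bigl(1+\tfrac{2}{\mu\kappa^2}\bigr)\abs{x-p(x)}$ --- which indeed is already established inside the proof of \cref{prop:compactness} and uses only $\mu$-strong convexity --- against $\rho_t$, together with the identity $W_2(\rho_t,\delta_{x^\ast})^2=\int\abs{x-x^\ast}^2\d\rho_t(x)$, converts the decay of \cref{prop:exp_cvgc} into the quantitative estimate $W_2(\rho_t,\delta_{x^\ast})^2\le 2\bigl(1+\tfrac{2}{\mu\kappa^2}\bigr)^2\L[\rho_0]e^{-C_2 t}$. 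This bypasses the compactness proposition and the upper Hessian bound $D^2V\preccurlyeq L\,\mathbb I$ entirely, so it yields the theorem under the hypotheses of \cref{prop:exp_cvgc} alone and with an explicit exponential rate, whereas the paper's softer compactness argument is the one more likely to survive in settings where such a pointwise coercivity estimate is unavailable. You are also right that the literal hypothesis $\L[\rho_0]=0$ renders the statement trivial (it forces $\rho_0=\delta_{x^\ast}$); what both your argument and the paper's actually use is only $\L[\rho_0]<\infty$.
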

\begin{proof}
    By \cref{prop:exp_cvgc} it holds $\L[\rho_t]\to 0$ and, in particular, $t\mapsto\L[\rho_t]$ is uniformly bounded.
    Hence, one can apply \cref{prop:compactness} to obtain that a subsequence of $\rho_t$ converges to some probability measure $\rho_\infty$ in the Wasserstein-2 distance and hence also weakly.
    Since $\rho\mapsto\L[\rho]$ is an integral of the continuous and lower-bounded function $x\mapsto\frac{1}{2}\abs{x-p(x)}^2$ against $\rho$, it is lower semicontinuous with respect to weak convergence of measures.
    It follows $\L[\rho_\infty]=0$ and hence $\rho_\infty=\delta_{x^\ast}$, where $x^\ast$ is the global minimizer of $V$.
    The uniqueness of the minimizer implies that the whole sequence $\rho_t$ converges to $\delta_{x^\ast}$ and this concludes the proof.
\end{proof}
\nc

\section{Numerical examples}\label{sec:numerics}

In this section we evaluate the numerical performance of the proposed algorithms. In all our experiments we chose a time step parameter of $\d t=0.01$. The code to reproduce all numerical experiments is available on \texttt{GitHub}\footnote{\url{https://github.com/TimRoith/PolarCBO}}.

\subsection{Unimodal Ackley function}

\begin{figure}[thb]
\begin{subfigure}{.32\textwidth}%
\includegraphics[width=\textwidth]{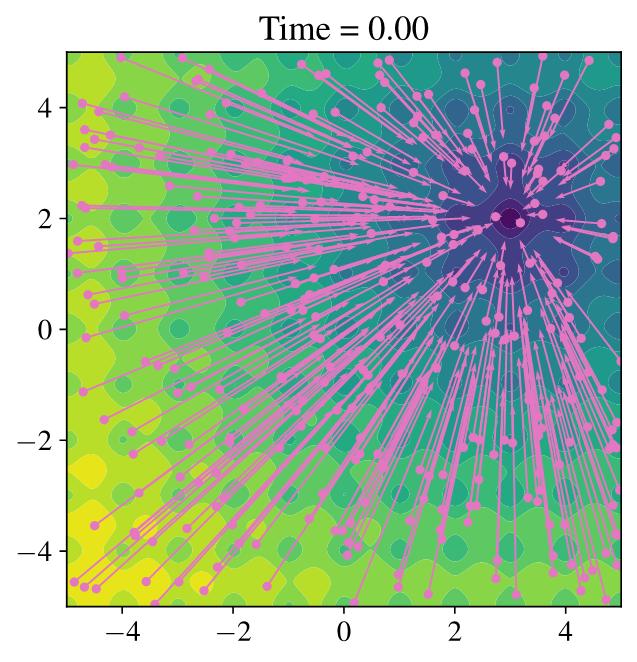}
\end{subfigure}%
\hfill%
\begin{subfigure}{.32\textwidth}%
\includegraphics[width=\textwidth]{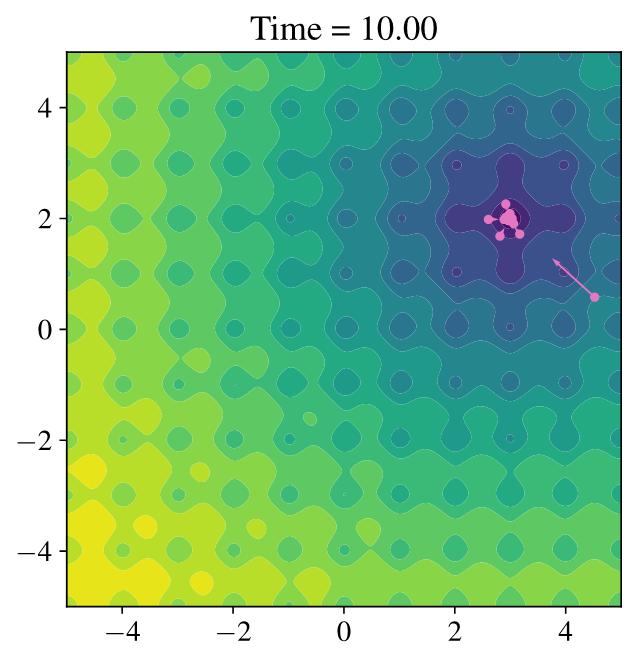}
\end{subfigure}%
\hfill%
\begin{subfigure}{.32\textwidth}%
\includegraphics[width=\textwidth]{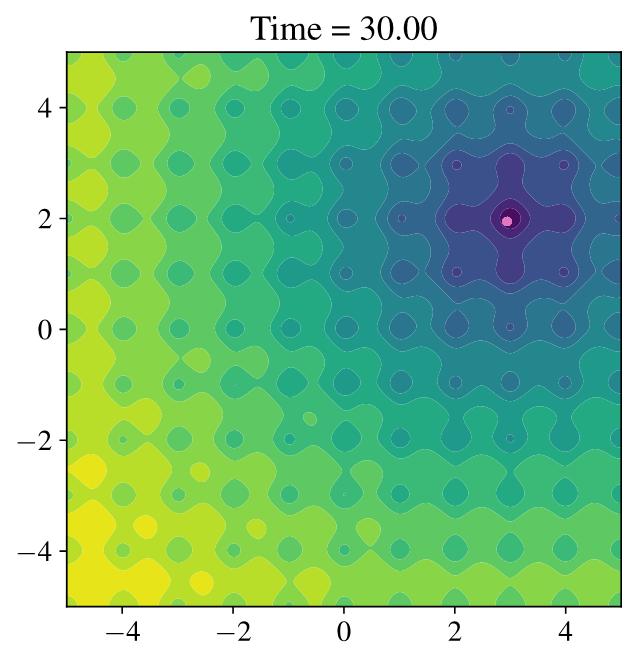}
\end{subfigure}%
\caption{Dynamics of standard CBO for minimizing the Ackley function. 
The points mark particle locations, the arrows the drift field towards the shared weighted mean.\label{fig:Ackley_CBO}}
%
%
%
\hfill%
\begin{subfigure}{.32\textwidth}
\includegraphics[width=\textwidth]{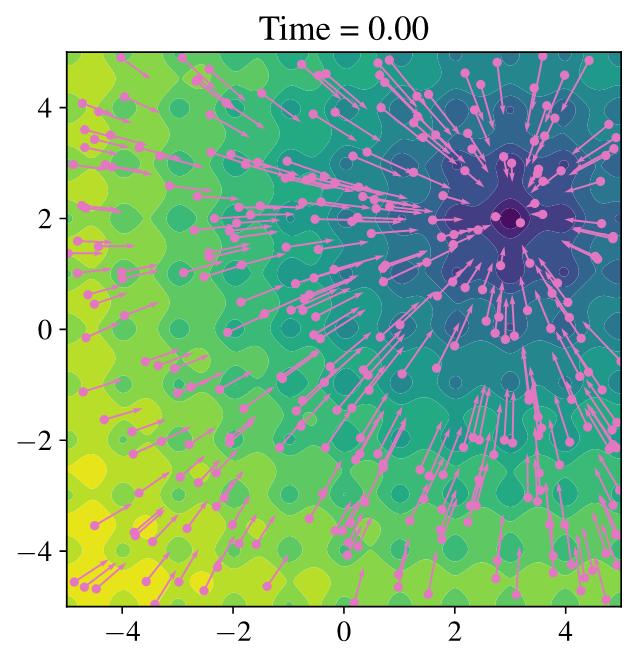}
\end{subfigure}%
\hfill%
\begin{subfigure}{.32\textwidth}%
\includegraphics[width=\textwidth]{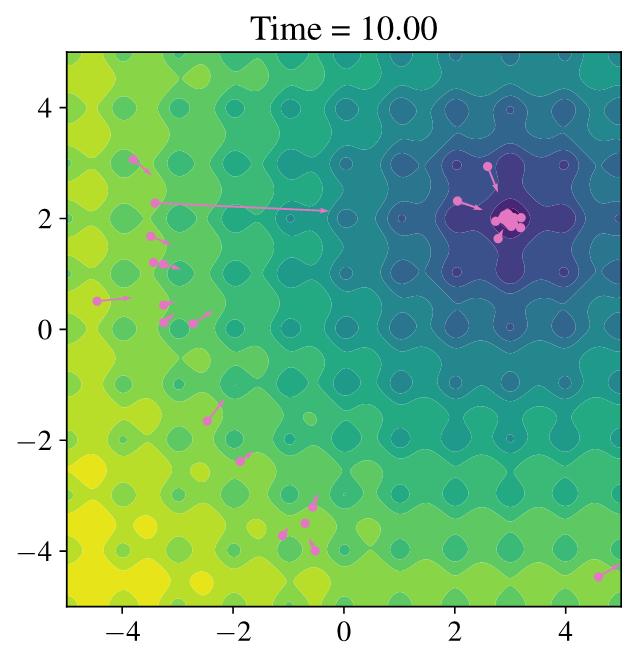}
\end{subfigure}%
\hfill%
\begin{subfigure}{.32\textwidth}%
\includegraphics[width=\textwidth]{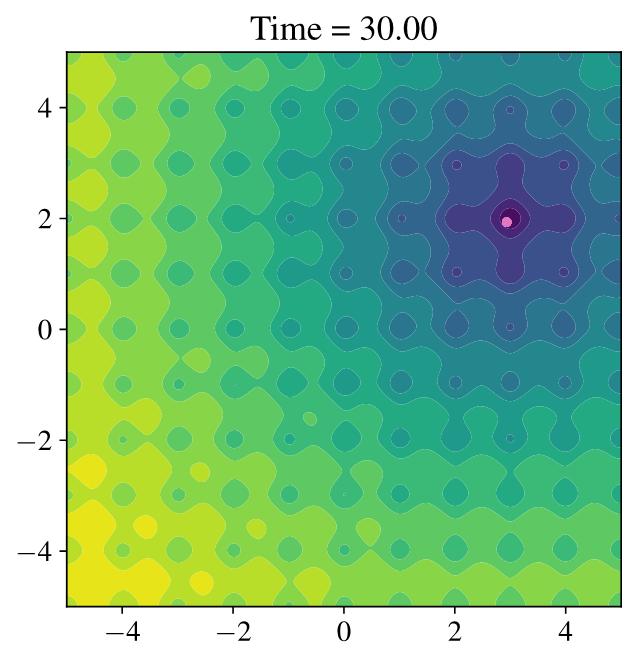}
\end{subfigure}%
\caption{Dynamics of the proposed polarized CBO for minimizing the Ackley function.
The points mark particle locations, the arrows the drift field towards the individual weighted means.\label{fig:Ackley_KCBO}}
\end{figure}
In the first example we perform a consistency check for our method for finding the unique global minimum of the Ackley function \cite{ackley2012}, defined as 
\begin{align}\label{eq:Ackley_fct}
    A(x) := -20 \exp\left(-\frac{0.2}{\sqrt{d}} \abs{x}\right) -
    \exp\left(\frac{1}{d}\sum_{n=1}^d\cos(2\pi x_n)\right) + e + 20.
\end{align}
This function has a global minimum at $0 \in\nR^d$ with $A(0)=0$ and in this experiment we choose $d=2$ and minimize the shifted version $V(x) := A(x-(3,2))$ which has its global minimum at $(3,2)\in\nR^2$. 
We compare the dynamics of standard CBO with our proposed polarized variant at three different time points in \cref{fig:Ackley_CBO,fig:Ackley_KCBO}.
We use the the Gaussian kernel $\kernel(x,y) := \exp\left(-\frac{\abs{x-y}^2}{2\kappa^2}\right)$ with standard deviation $\kappa=\infty$ for standard CBO and $\kappa=1$ for polarized CBO.
Furthermore, we choose $\beta=1$.
In this \rev simple\nc\ situation with a unique global minimum we observe that both standard and polarized CBO find the global minimum and do not get stuck in local minima.
Notably, the polarized variant converges slightly slower than standard CBO which is due to the localization effect of the kernel with a relatively small standard deviation.

\subsection{Multimodal Rastrigin function}

In this example we evaluate different choices of kernel functions for minimizing a Rastrigin-type function with three global minima. 
The original Rastrigin function \cite{rastrigin1974systems} on $\nR^d$ is defined as
\begin{align}
    R(x) := 10 d + \sum_{n=1}^d(x_n^2 - 10\cos(2\pi x_n))
\end{align}
and has a global minimum at $x=0$ with $U(0)=0$.
In this experiment we choose $d=2$ and minimize the product
\begin{align*}
    V(x) = \frac18 R(x) \, R(x-(3,2)) \, R(x+(1,3.5))
\end{align*}
which has three global minima.
Note that this function is very non-convex and at the same time extremely flat around its minima, see \cref{fig:Rastrigin} for a surface plot.

\begin{figure}[h]
    \centering
    \includegraphics[width=0.5\textwidth,trim=0.8cm 0cm 0cm 0.8cm,clip]{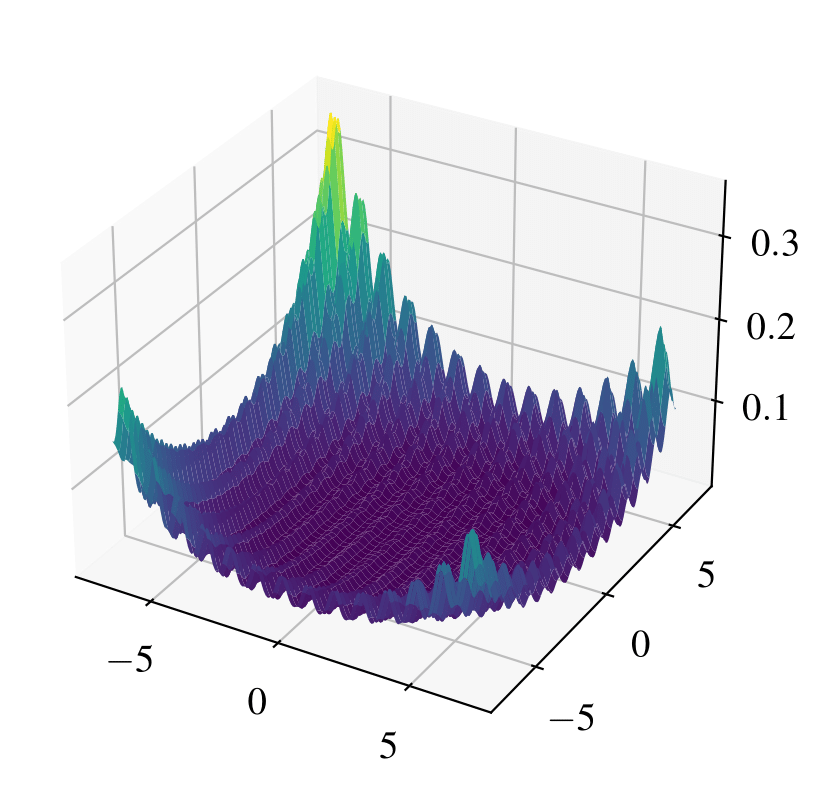}
    \caption{A variant of the Rastrigin function with three global minima.}
    \label{fig:Rastrigin}
\end{figure}

We consider the three different kernels
\begin{alignat}{2}
    \kernel(x,y) &= \exp\left(-\frac{\abs{x-y}^2}{2\kappa^2}\right)\quad&&\text{Gaussian kernel},
    \\
    \kernel(x,y) &= \exp\left(-\frac{\abs{x-y}}{\kappa}\right)\quad&&\text{Laplace kernel},
    \\
    \kernel(x,y) &= 1_{\abs{x-y}\leq\kappa}(x,y)\quad&&\text{bounded confidence kernel},
\end{alignat}
and the corresponding results of our method are depicted in \cref{fig:Rastrigin_kernels}.
Again, we choose $\beta=1$.
The kernel parameters $\kappa$ were chosen sufficiently small for the methods to detect all three minima, which are marked with blue diamonds.
While the Gaussian and the Laplace kernel work similarly well---note however that the Laplace kernel needs a much smaller value of $\kappa$ than then Gaussian---the bounded confidence kernel (cf. the discussion in \cref{ss:polarized_CBO}) works suboptimally for the task of minimization.
While it manages to detect all three minima a lot of particle get stuck in suboptimal consensus points which can be explained by the fact that the kernel has compact support.

\begin{figure}[thb!]
\begin{subfigure}{\textwidth}%
\includegraphics[width=0.3\textwidth]{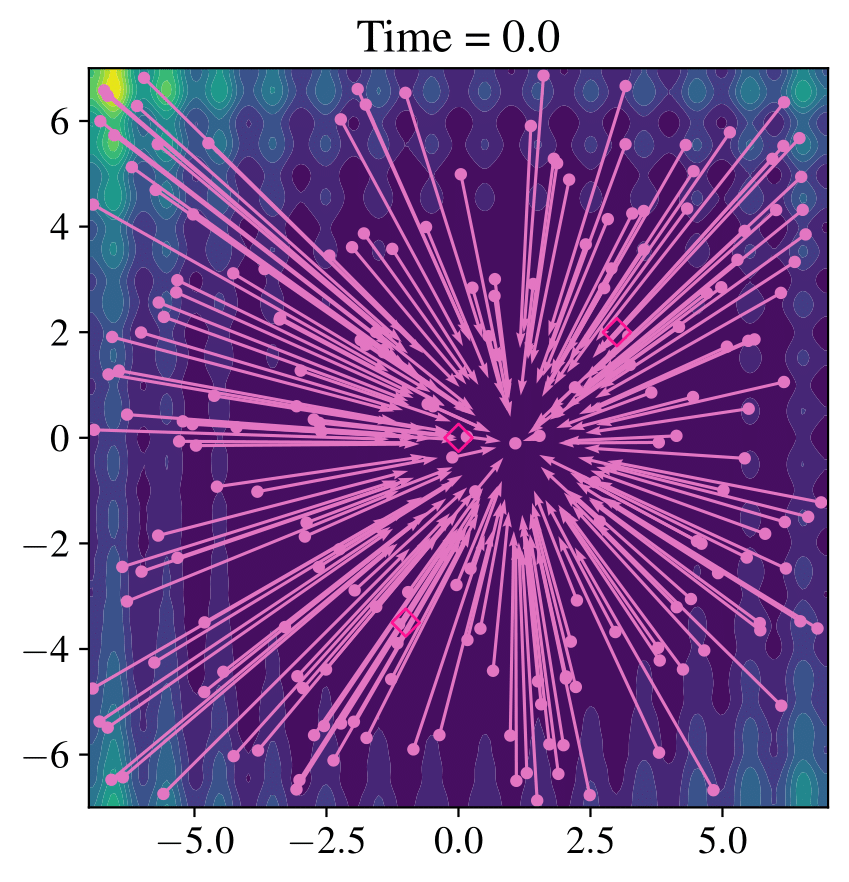}\hfill%
\includegraphics[width=0.3\textwidth]{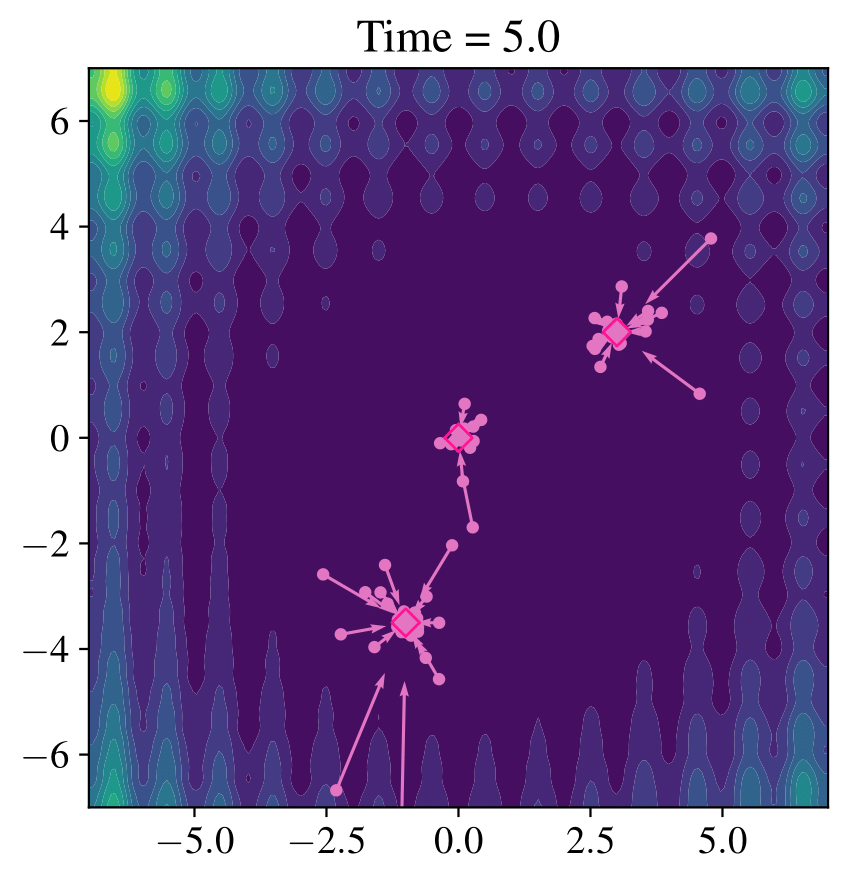}\hfill%
\includegraphics[width=0.3\textwidth]{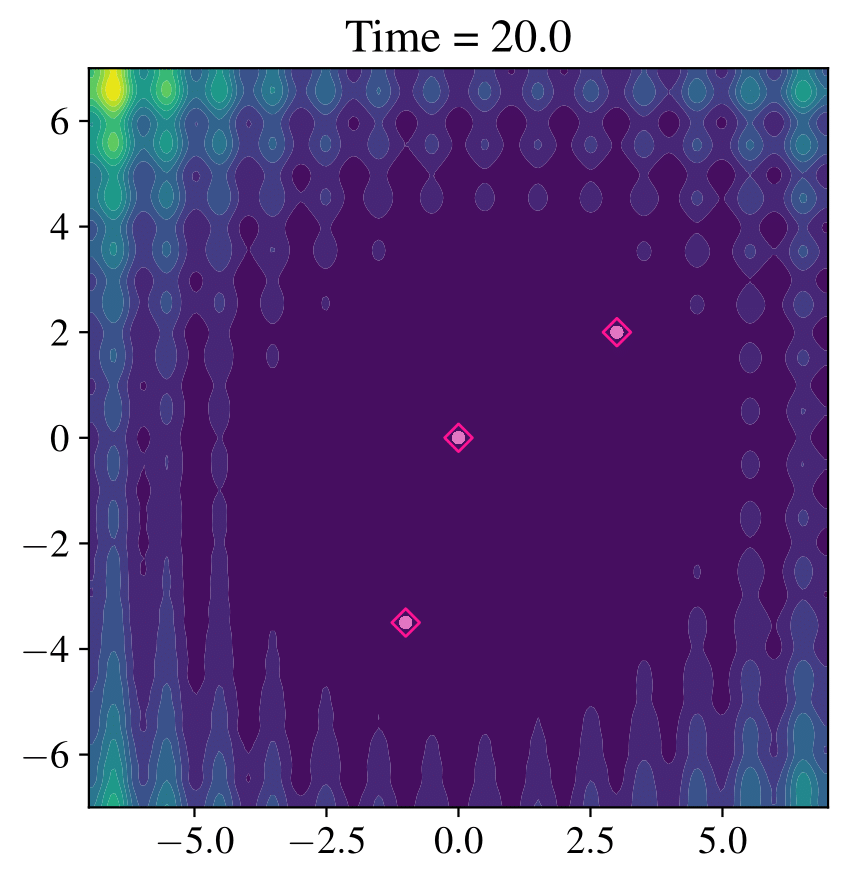}
\caption{Gaussian kernel with $\kappa=0.5$}
\end{subfigure}%
\\
\begin{subfigure}{\textwidth}%
\includegraphics[width=0.3\textwidth]{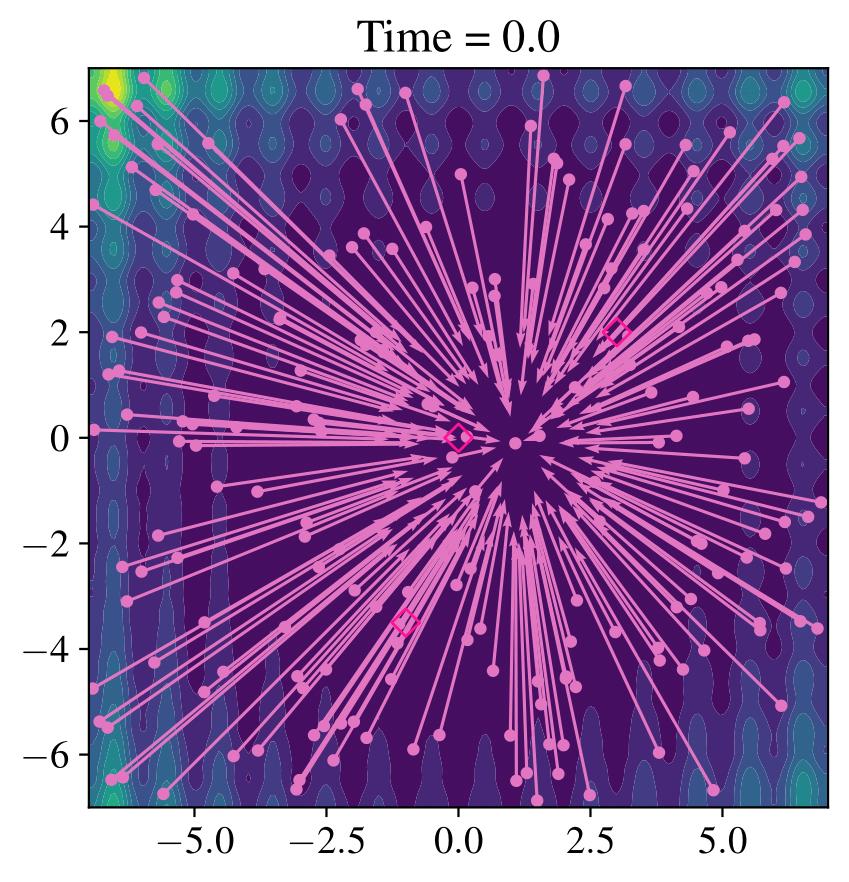}\hfill%
\includegraphics[width=0.3\textwidth]{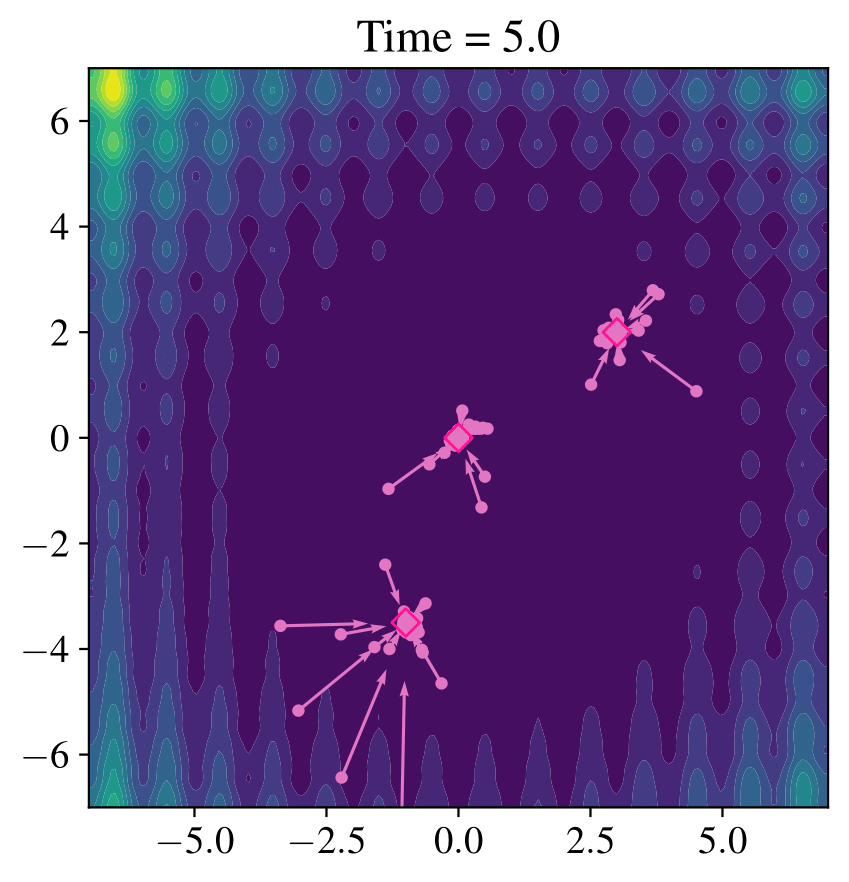}\hfill%
\includegraphics[width=0.3\textwidth]{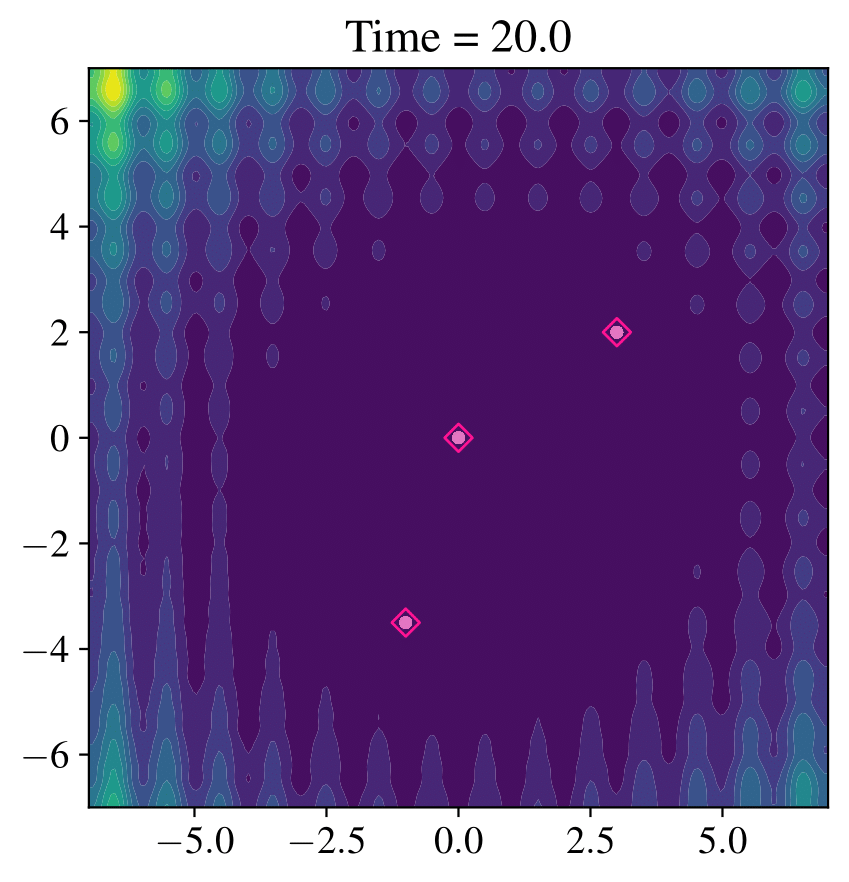}
\caption{Laplace kernel with $\kappa=0.05$.}
\end{subfigure}%
\\
\begin{subfigure}{\textwidth}%
\includegraphics[width=0.3\textwidth]{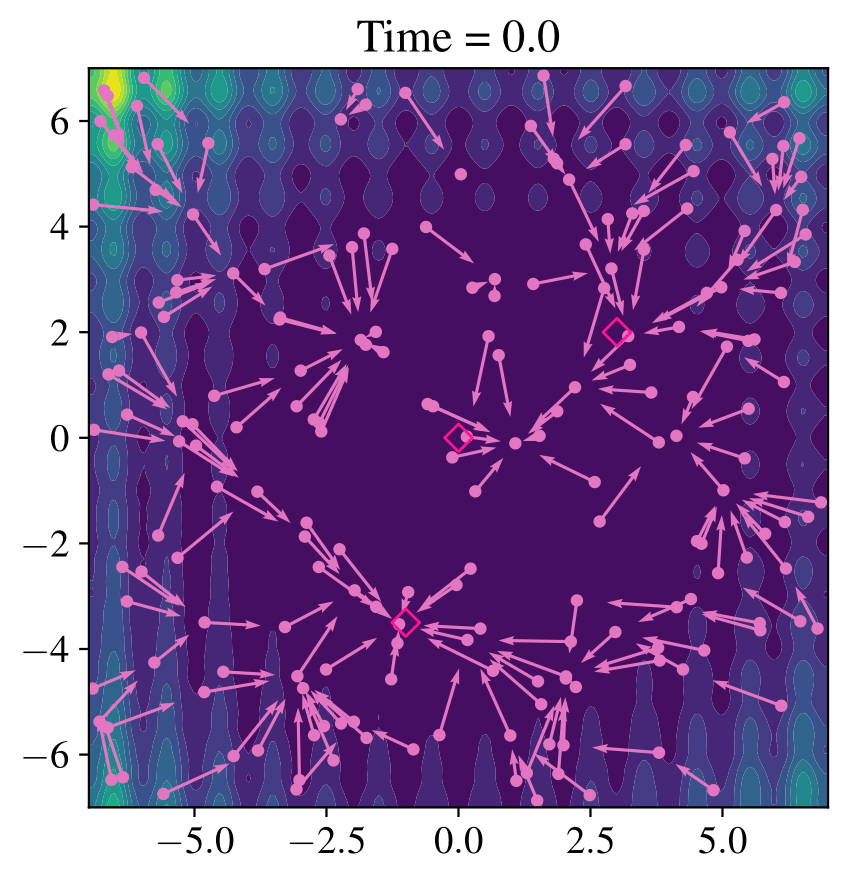}\hfill%
\includegraphics[width=0.3\textwidth]{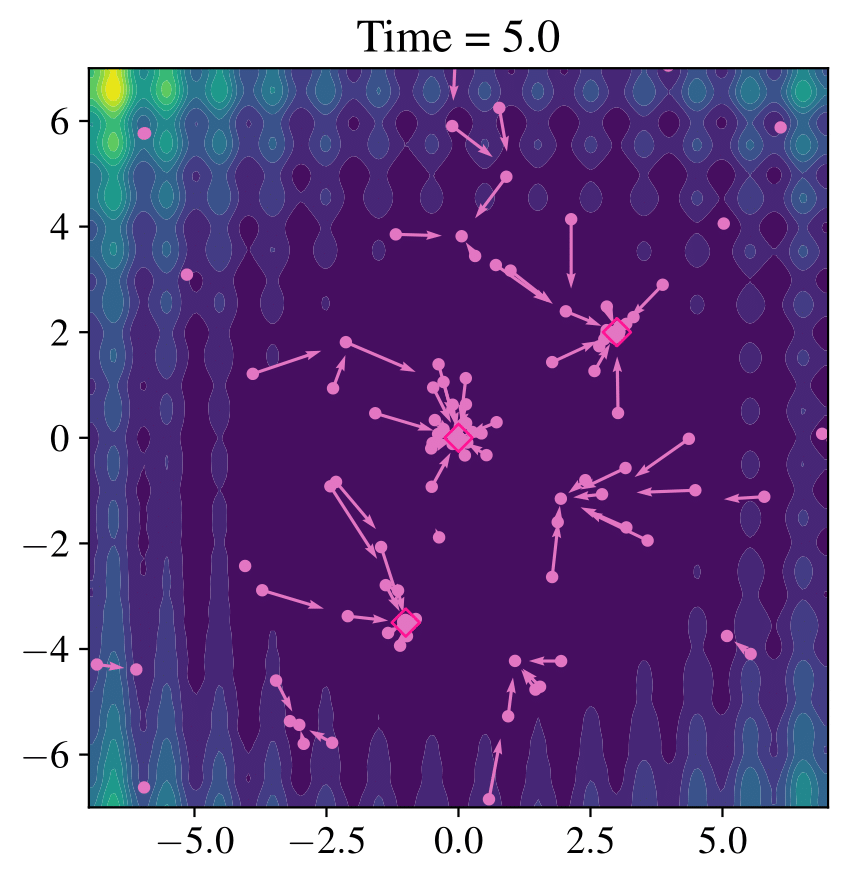}\hfill%
\includegraphics[width=0.3\textwidth]{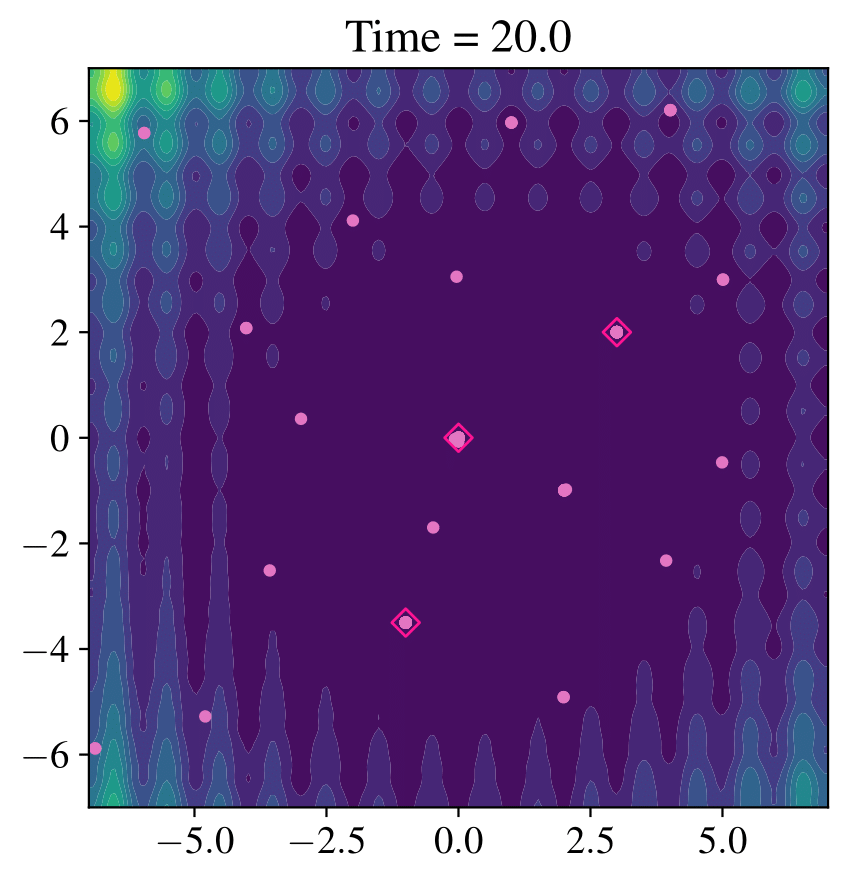}
\caption{Bounded confidence kernel with $\kappa=2$.}
\end{subfigure}%
\caption{Dynamics of polarized CBO with different kernels for minimizing a Rastrigin-type function with three global minima, marked by red diamonds. 
For all kernels all minima were detected.
Gaussian and Laplace kernel work especially well, whereas the bounded confidence kernel generates too many consensus points due to its compact support.}\label{fig:Rastrigin_kernels}
\end{figure}

\subsection{Multimodal Ackley function}

We proceed with quantitative evaluations of our method for a multimodal version of the Ackley function \labelcref{eq:Ackley_fct}, defined as
\begin{align*}
    V(x) := \prod_{i=1}^N A(x - z_i),
\end{align*}
where $\{z_i\in\nR^d\st i=1,\dots,N\}$ are points which constitute the global minimizers of $V$, and $U$ is the standard Ackley function defined in \labelcref{eq:Ackley_fct}.
In \cref{tab:multimodal_ackley} we plot how many of the three minima in dimension $d=2$ were detected by the proposed polarized CBO method with a Gaussian kernel and different values of the standard deviation $\kappa$.
That means, we show the percentage of runs that detected at least $1$, $2$, or $3$ minima. Here, we employed the standard noise model as specified in \labelcref{eq:our_CBO} with $\sigma=1.0$ and $\beta=1.0$.
For completeness we also include results for standard CBO (i.e., $\kappa=\infty$) which by definition can detect at most one minimum.

We say that a method detects a minimum if at convergence there exists a weighted mean $\wkmean[\rho](x)$ which is closer than $0.25$ in the infinity norm to the minimum.
For standard CBO where $\wkmean[\rho](x) = \wmean[\rho]$ for all $x$ this coincides with the definition of success in \cite{carrillo2022consensus_sampling,pinnau2017consensus}. For our experiments we employ $N=3$ different minima $z_1,z_2,z_3\in\nR^{d}$ with 
\begin{align*}
\resizebox{0.91\hsize}{!}{%
$
(z_1)_i :=
\begin{cases}
-2&\text{ if } i\operatorname{mod}2 = 0,\\
\phantom{-}1&\text{ else, }
\end{cases}
\quad
(z_2)_i :=
\begin{cases}
\phantom{-}2&\text{ if } i\operatorname{mod}2 = 0,\\
-1&\text{ else, }
\end{cases}
\quad
(z_3)_i :=
\begin{cases}
-1&\text{ if } i\operatorname{mod}2 = 0,\\
-3&\text{ else}
\end{cases}
$}
\end{align*}
for $i=1,\ldots, d$. While polarized CBO works very well in the two-dimensional example, \cref{tab:multimodal_ackley_d_10} shows that in dimension $d=10$, it fails to detect more than one minimum. However, the cluster method from \labelcref{alg:CCBO} exhibits significantly improved behavior and manages to detect all three minima frequently. Here, we employed the coordinate-wise noise model from \labelcref{eq:compNoise} with $\sigma=7.5$. Additionally we employed a simple scheduling for the parameter $\beta$, where we start with $\beta=30$ and increase it in each step via $$\beta\gets 1.01\cdot \beta$$ up to a limit of $\beta_\text{max}=10^7$. Here, one could potentially employ more sophisticated approaches as proposed in \cite{carrillo2022consensus_sampling}. For the cluster-based methods we chose $\alpha=5.0$ in \cref{alg:CCBO}.

\begin{table}[t!]
\centering%
\setlength{\extrarowheight}{3pt}%
\begin{adjustbox}{width=\columnwidth,center}
\parbox{2mm}{\rotatebox[origin=c]{90}{\hspace{-25pt}Polarized}}
\hspace{5pt}
\begin{tabular}{|l|c|c|c||c|c|c||c|c|c||c|c|c|}
\hline
 &
  \multicolumn{3}{c||}{$J=25$} &
  \multicolumn{3}{c||}{$J=50$} &
  \multicolumn{3}{c||}{$J=100$} &
  \multicolumn{3}{c|}{$J=200$}\\
\hline
$\#$ minima &
$\geq 1$ & $\geq 2$ & $\geq 3$ &
$\geq 1$ & $\geq 2$ & $\geq 3$ &
$\geq 1$ & $\geq 2$ & $\geq 3$ &
$\geq 1$ & $\geq 2$ & $\geq 3$ \\
\hline
$\kappa = 0.1$ & $33\%$ & $7\%$ & $0\%$ & $86\%$ & $59\%$ & $24\%$ & $100\%$ & $96\%$ & $67\%$ & $100\%$ & $100\%$ & $97\%$\\ 
$\kappa = 0.5$ & $100\%$ & $62\%$ & $5\%$ & $100\%$ & $78\%$ & $18\%$ & $100\%$ & $93\%$ & $41\%$ & $100\%$ & $100\%$ & $84\%$\\ 
$\kappa = 1$ & $100\%$ & $5\%$ & $0\%$ & $100\%$ & $12\%$ & $0\%$ & $100\%$ & $14\%$ & $0\%$ & $100\%$ & $24\%$ & $0\%$\\ 
CBO & $100\%$ & $0\%$ & $0\%$ & $100\%$ & $0\%$ & $0\%$ & $100\%$ & $0\%$ & $0\%$ & $100\%$ & $0\%$ & $0\%$\\ 
\hline
\end{tabular}
\end{adjustbox}
\caption{Performance of polarized CBO for minimizing the multimodal Ackley function with \fbox{$d=2$} and three global minima: 
Averaging over $100$ independent runs of $1,000$ iterations we plot how many percent of the succeeded in detecting \emph{at least} $1,2$, or $3$ of the minima.
Note that, by definition, standard CBO ($\kappa=\infty$) can detect \emph{at most one} minimum.}
\label{tab:multimodal_ackley}
\end{table}

Even for $\kappa=\infty$, where the kernel has no influence anymore, the method works very well.
Note that this case does not correspond to standard CBO. Furthermore, \cref{tab:multimodal_ackley_d_10} shows that our polarized method can outperform standard CBO at finding at least one minimum, albeit at the cost of higher complexity.

We also test the cluster method in $d=30$ dimensions and the results can be found in \cref{tab:multimodal_ackley_MM_d_30}. We observe that it is harder to find multiple minima, however for $J=1600$ the method finds at least two minima in over $50\%$ of the runs for $\kappa\geq 1$. For smaller particle numbers the algorithm performs better for $\kappa \leq 1$, although the percentage of runs, where multiple minima are found is very low.

\begin{table}[htb]
\centering%
\setlength{\extrarowheight}{3pt}%
\begin{adjustbox}{width=\columnwidth,center}
\parbox{2mm}{\rotatebox[origin=c]{90}{\hspace{-30pt}Cluster \hspace{10pt} Polarized}}
\hspace{5pt}
\begin{tabular}{|l|c|c|c||c|c|c||c|c|c||c|c|c|}
\hline
                                &
  \multicolumn{3}{c||}{$J=50$}  &
  \multicolumn{3}{c||}{$J=100$} &
  \multicolumn{3}{c||}{$J=200$} &
  \multicolumn{3}{c|}{$J=400$}\\
\hline
$\#$ minima &
$\geq 1$ & $\geq 2$ & $\geq 3$ &
$\geq 1$ & $\geq 2$ & $\geq 3$ &
$\geq 1$ & $\geq 2$ & $\geq 3$ &
$\geq 1$ & $\geq 2$ & $\geq 3$ \\
\hline
$\kappa = 0.001$ & $5\%$ & $0\%$ & $0\%$ & $18\%$ & $0\%$ & $0\%$ & $26\%$ & $0\%$ & $0\%$ & $63\%$ & $1\%$ & $0\%$\\ 
$\kappa = 0.01$ & $26\%$ & $0\%$ & $0\%$ & $56\%$ & $0\%$ & $0\%$ & $80\%$ & $1\%$ & $0\%$ & $79\%$ & $3\%$ & $0\%$\\ 
$\kappa = 0.1$ & $36\%$ & $0\%$ & $0\%$ & $68\%$ & $0\%$ & $0\%$ & $73\%$ & $0\%$ & $0\%$ & $75\%$ & $0\%$ & $0\%$\\ 
CBO & $32\%$ & $0\%$ & $0\%$ & $55\%$ & $0\%$ & $0\%$ & $74\%$ & $0\%$ & $0\%$ & $72\%$ & $0\%$ & $0\%$\\ 
\hhline{|*{13}{=}|}
$\kappa = 10^{-7}$ & $26\%$ & $2\%$ & $0\%$ & $75\%$ & $26\%$ & $2\%$ & $98\%$ & $59\%$ & $13\%$ & $100\%$ & $89\%$ & $28\%$\\ 
$\kappa = 0.1$ & $11\%$ & $1\%$ & $0\%$ & $68\%$ & $13\%$ & $0\%$ & $98\%$ & $77\%$ & $19\%$ & $100\%$ & $96\%$ & $39\%$\\ 
$\kappa = \infty$ & $6\%$ & $0\%$ & $0\%$ & $65\%$ & $11\%$ & $0\%$ & $98\%$ & $73\%$ & $15\%$ & $100\%$ & $92\%$ & $41\%$\\ 
\hline
\end{tabular}
\end{adjustbox}
\caption{Performance of polarized and cluster CBO for minimizing the multimodal Ackley function with \fbox{$d=10$} and ceteris paribus.
The first method is slightly better than CBO but fails to detect more than one minimum in most cases.
The cluster variant works well for a wide range of parameters $\kappa$. \rev It manages to detect all minima in some runs, already for $J=200$ particles.\nc\
Note that $\kappa=\infty$ \emph{does not} correspond to CBO for this method.}
\label{tab:multimodal_ackley_d_10}
\end{table}

\begin{table}[htb]
\centering%
\setlength{\extrarowheight}{3pt}%
\begin{adjustbox}{width=\columnwidth,center}
\parbox{2mm}{\rotatebox[origin=c]{90}{\hspace{-25pt}Cluster}}
\hspace{5pt}
\begin{tabular}{|l|c|c|c||c|c|c||c|c|c||c|c|c|}
\hline
 &
  \multicolumn{3}{c||}{$J=200$} &
  \multicolumn{3}{c||}{$J=400$} &
  \multicolumn{3}{c||}{$J=800$} &
  \multicolumn{3}{c|}{$J=1600$}\\
\hline
$\#$ minima &
$\geq 1$ & $\geq 2$ & $\geq 3$ &
$\geq 1$ & $\geq 2$ & $\geq 3$ &
$\geq 1$ & $\geq 2$ & $\geq 3$ &
$\geq 1$ & $\geq 2$ & $\geq 3$ \\
\hline
$\kappa = 0.01$ & $15\%$ & $0\%$ & $0\%$ & $61\%$ & $5\%$ & $0\%$ & $92\%$ & $28\%$ & $1\%$ & $99\%$ & $45\%$ & $1\%$\\ 
$\kappa = 0.1$ & $9\%$ & $0\%$ & $0\%$ & $59\%$ & $6\%$ & $0\%$ & $94\%$ & $34\%$ & $2\%$ & $98\%$ & $49\%$ & $2\%$\\ 
$\kappa = 1$ & $4\%$ & $0\%$ & $0\%$ & $57\%$ & $5\%$ & $0\%$ & $88\%$ & $40\%$ & $0\%$ & $97\%$ & $57\%$ & $4\%$\\ 
$\kappa = 10$ & $5\%$ & $0\%$ & $0\%$ & $58\%$ & $7\%$ & $0\%$ & $89\%$ & $35\%$ & $1\%$ & $98\%$ & $58\%$ & $1\%$\\ 
$\kappa = 100$ & $4\%$ & $0\%$ & $0\%$ & $59\%$ & $7\%$ & $0\%$ & $89\%$ & $32\%$ & $0\%$ & $98\%$ & $57\%$ & $2\%$\\ 
$\kappa = \infty$ & $5\%$ & $0\%$ & $0\%$ & $58\%$ & $6\%$ & $0\%$ & $88\%$ & $33\%$ & $0\%$ & $98\%$ & $54\%$ & $2\%$\\ 
\hline
\end{tabular}
\end{adjustbox}
\caption{Performance of cluster CBO for minimizing the multimodal Ackley function with \fbox{$d=30$} and ceteris paribus.}
\label{tab:multimodal_ackley_MM_d_30}
\end{table}

\subsection{Multimodal sampling}

\begin{figure}[thb]
\begin{subfigure}{.24\textwidth}%
\includegraphics[width=\textwidth]{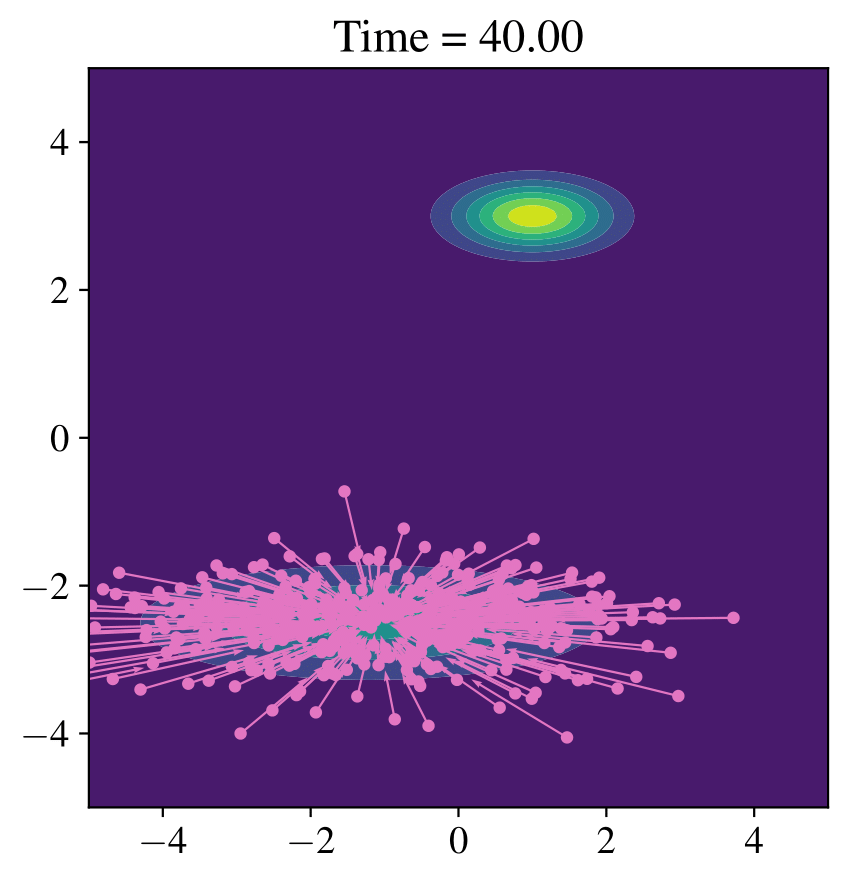}
\caption{Standard CBS.}
\end{subfigure}%
\hfill%
\begin{subfigure}{.24\textwidth}%
\includegraphics[width=\textwidth]{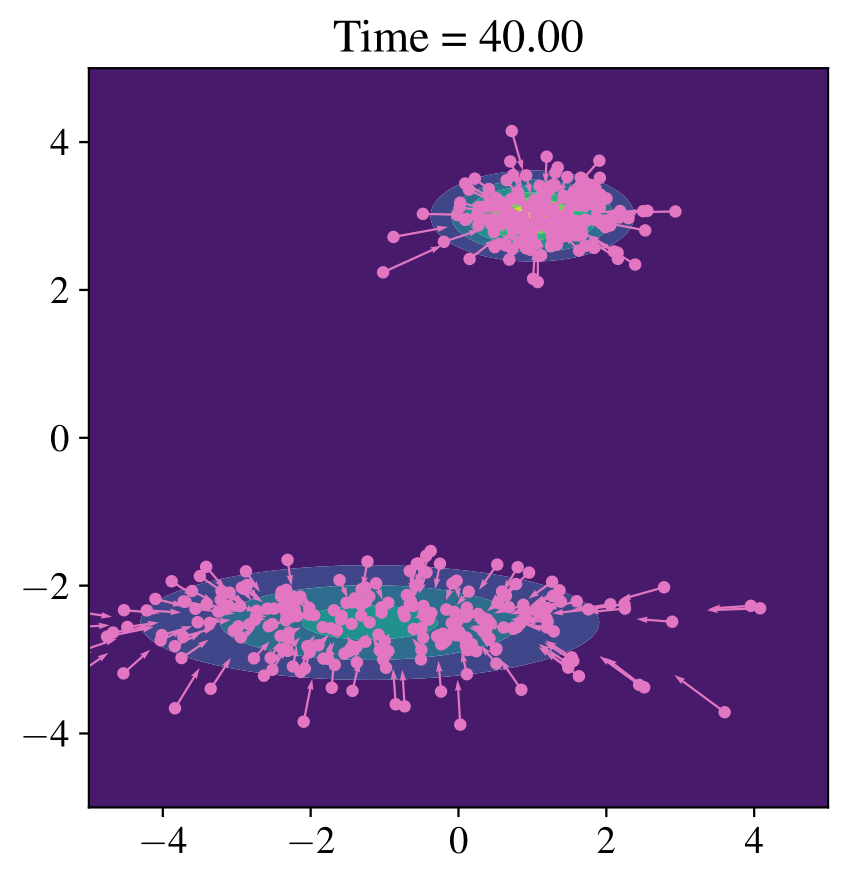}
\caption{$\kappa=0.8$.}
\end{subfigure}%
\hfill%
\begin{subfigure}{.24\textwidth}%
\includegraphics[width=\textwidth]{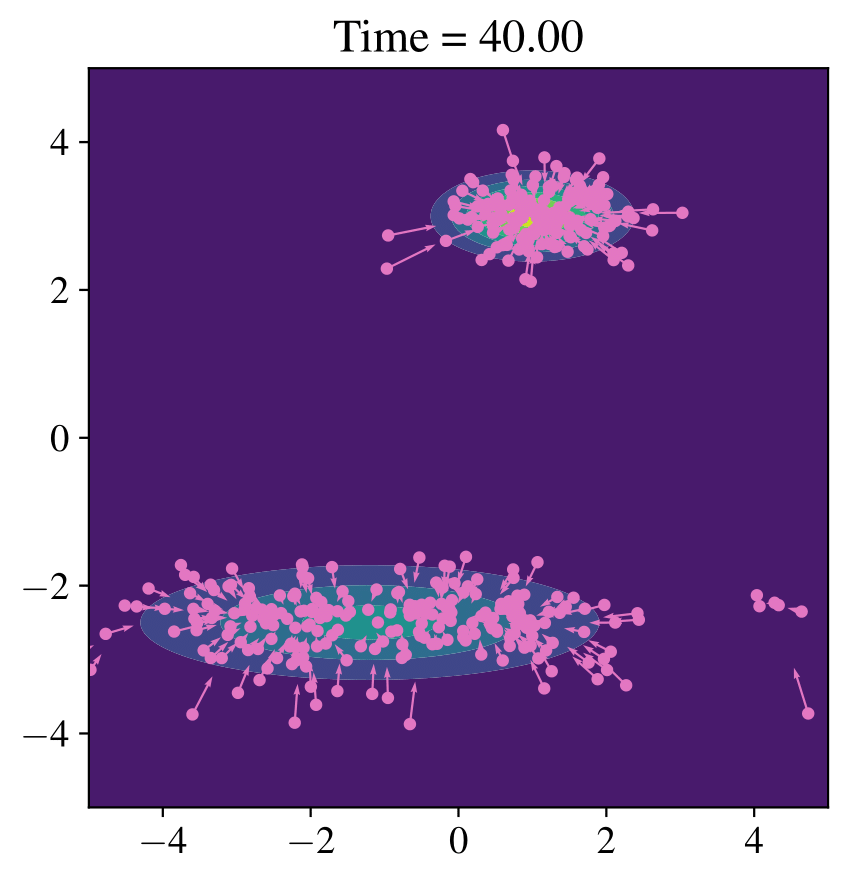}
\caption{$\kappa=0.6$.}
\end{subfigure}%
\hfill%
\begin{subfigure}{.24\textwidth}%
\includegraphics[width=\textwidth]{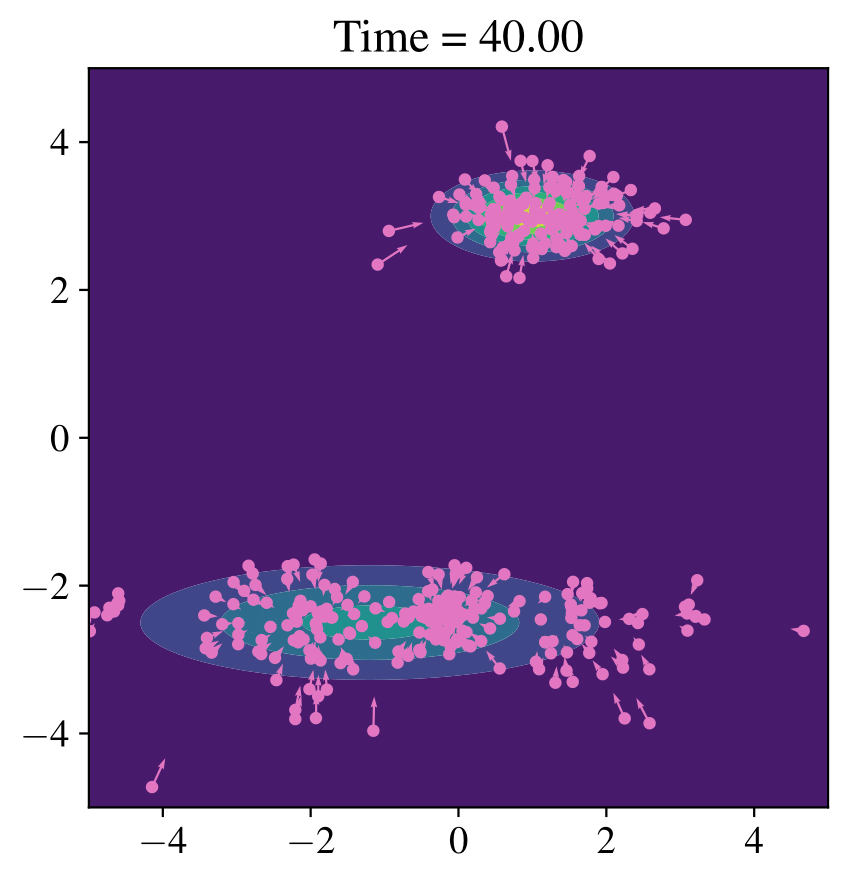}
\caption{$\kappa=0.4$.}\end{subfigure}%
\\
\begin{subfigure}{.24\textwidth}%
\includegraphics[width=\textwidth]{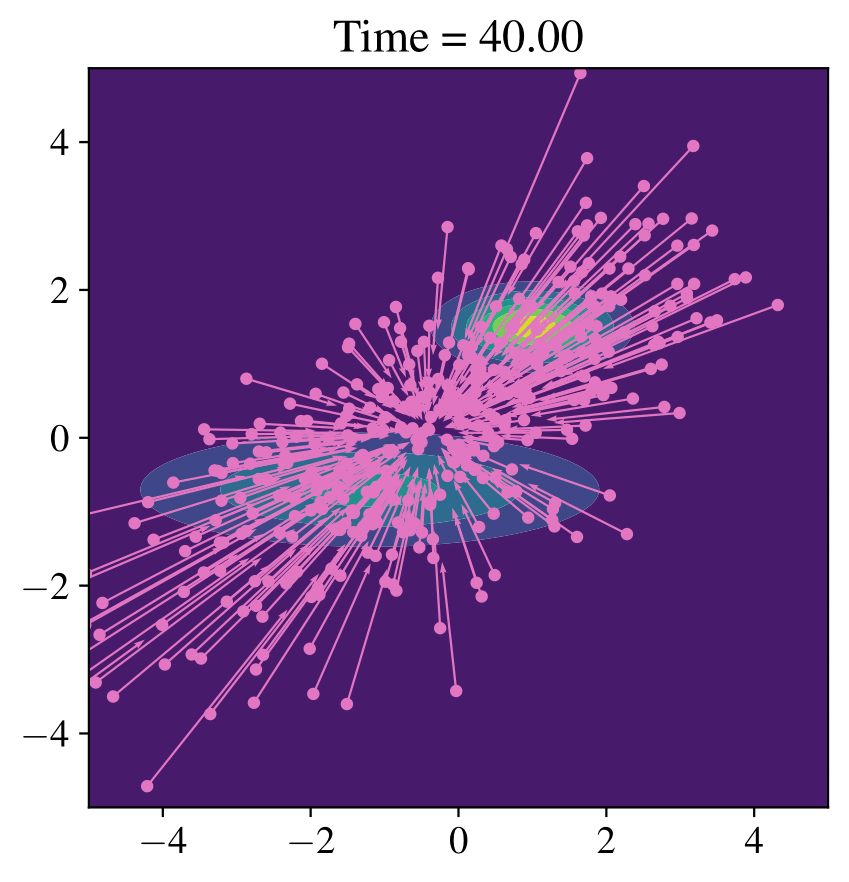}
\caption{Standard CBS.}
\end{subfigure}%
\hfill%
\begin{subfigure}{.24\textwidth}%
\includegraphics[width=\textwidth]{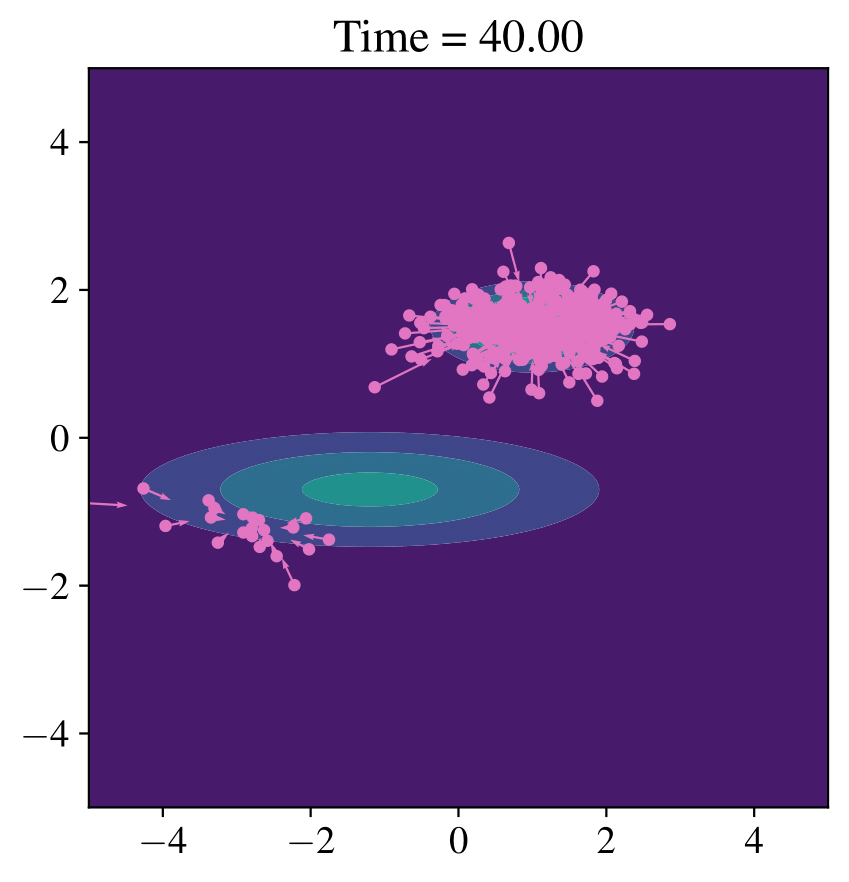}
\caption{$\kappa=0.8$.}
\end{subfigure}%
\hfill%
\begin{subfigure}{.24\textwidth}%
\includegraphics[width=\textwidth]{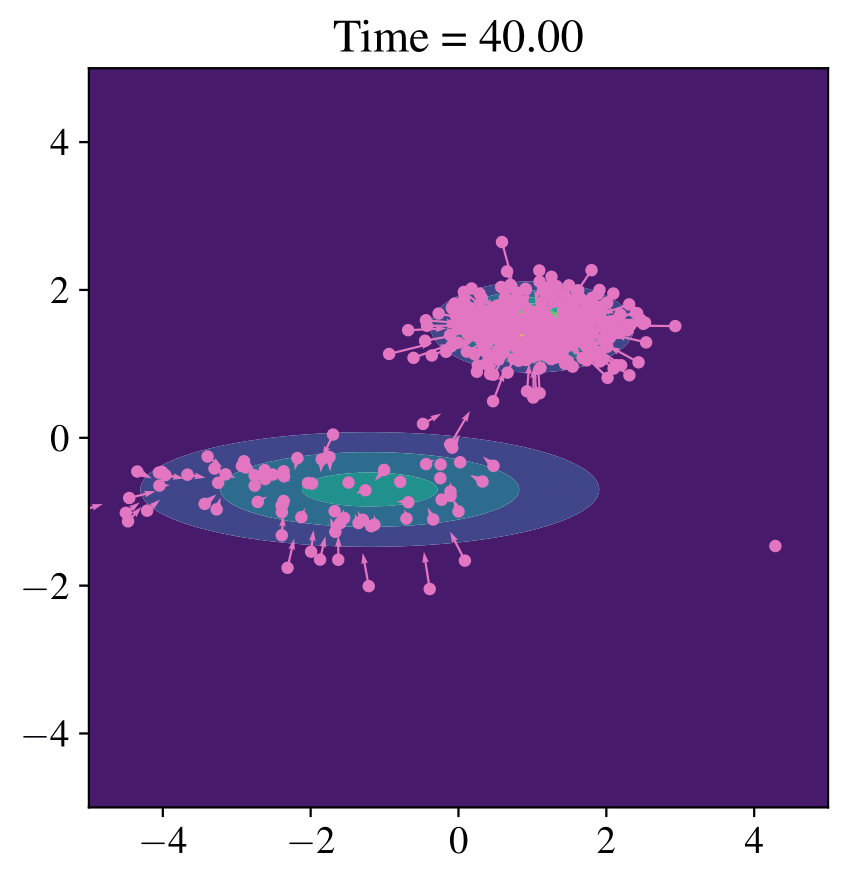}
\caption{$\kappa=0.6$.}
\end{subfigure}%
\hfill%
\begin{subfigure}{.24\textwidth}%
\includegraphics[width=\textwidth]{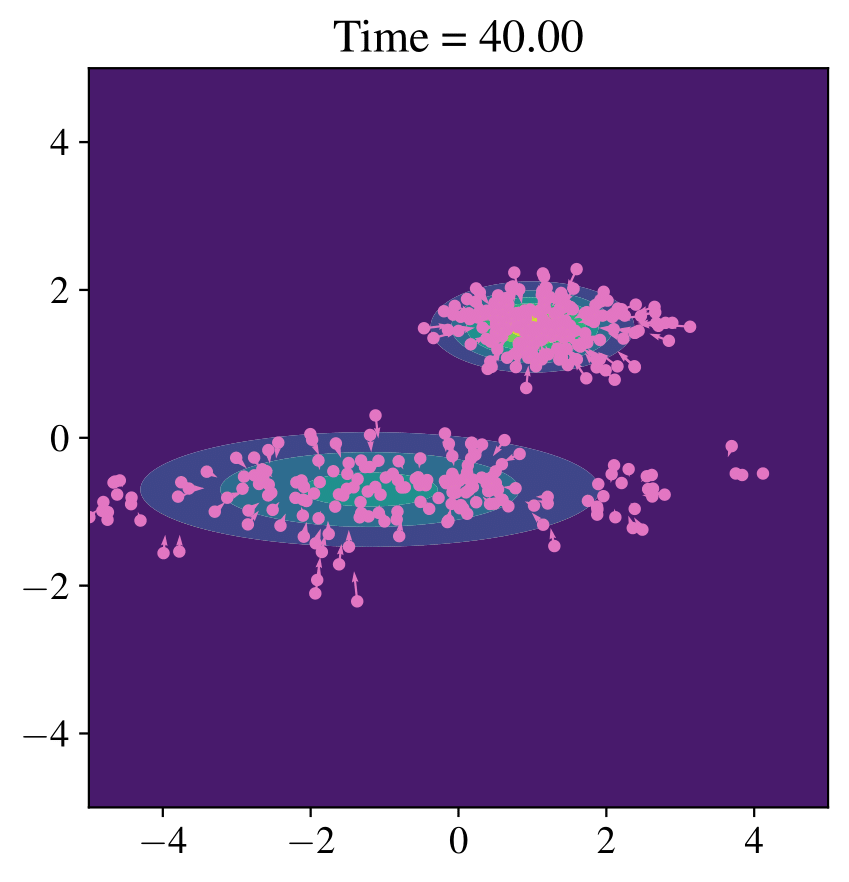}
\caption{$\kappa=0.4$.}
\end{subfigure}%
\caption{Dynamics of standard CBS and our polarized version, sampling from a mixture of Gaussians (top row: far apart, bottom row: closer together). 
The points mark particle locations, the arrows the drift field towards the weighted mean(s).
\label{fig:MixOfGaussians_CBS}}
\end{figure}

In this section we consider the task of sampling from a bimodal mixture of Gaussians.
\begin{align*}
\resizebox{\textwidth}{!}{$%
\exp(-V(x)) := 
\exp\left(-(x_1-a_1)^2 - \frac{(x_2-a_2)^2}{0.2}\right)
+
\frac{1}{2}\exp\left( -\frac{(x_1-b_1)^2}{8} - \frac{(x_2-b_2)^2}{0.5}\right),$}
\end{align*}
where the tuples $a=(a_1,a_2)$ and $b=(b_1,b_2)$ determine the centers of the clusters.
In \cref{fig:MixOfGaussians_CBS} we plot the result of standard CBS and our polarized variant using Gaussian kernels with three different standard deviations $\kappa\in\{0.4,0.6,0.8\}$.
For both methods we choose $\beta=1$.
Note that, at least for convex potentials $V$ with bounded Hessian, standard CBS is known to exhibit a Gaussian steady state. 
Being designed as a method for unimodal sampling, there is not much hope CBS can work in this multimodal situation.
Still we include CBS results for comparison. 

In contrast, our polarized modification of CBS manages to isolate the two modes. 
Note that in \cref{prop:unbiased} we proved that polarized CBS with a Gaussian kernel is unbiased when when the target measure is a Gaussian. 
We do not expect this to be true for target measures which are a mixture of Gaussians. 
Still, our results in the first row of \cref{fig:MixOfGaussians_CBS} show that, if the two Gaussians are sufficiently far apart, our method (second to fourth column) seems close to being unbiased.
Standard CBS (left column) can, by design, find at most one mode and here it successfully detects the lower Gaussian.
Note that we use the same number of particles for both algorithms, namely $J=400$, which is why the bottom mode for CBS looks more densely sampled.

The situation is different when the two modes are closer together, as shown in the bottom row of \cref{fig:MixOfGaussians_CBS}.
Here, standard CBS fails to detect even one mode whereas our polarized version does achieve that. 
However, as expected in this multimodal situation the result is not a perfect sample but appears to be biased.
Furthermore, if the clusters are close, the sensitivity with respect to the choice of the kernel width $\kappa$ is larger and $\kappa$ has to be chosen sufficiently small in order to generate enough samples for the lower cluster.

\subsection{Non-Gaussian sampling}

\begin{figure}[htb]
    \centering
    \begin{subfigure}{.4\textwidth}%
    \includegraphics[width=\textwidth]{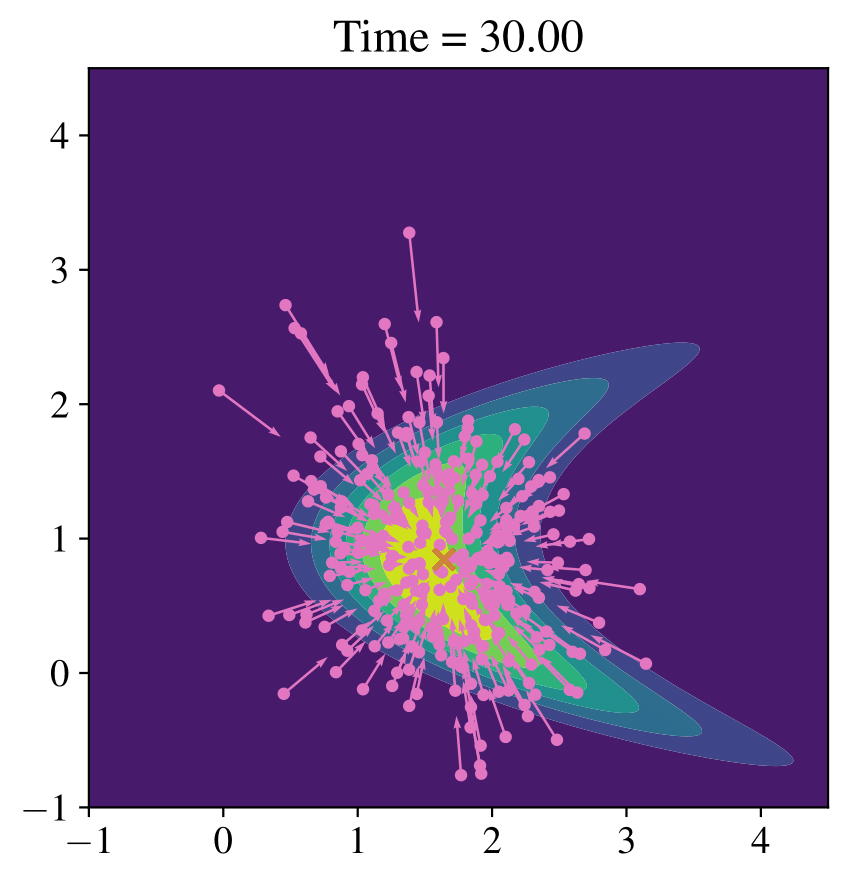}
    \caption{Standard CBS.}
    \end{subfigure}%
    \hspace*{2em}%
    \begin{subfigure}{.4\textwidth}%
    \includegraphics[width=\textwidth]{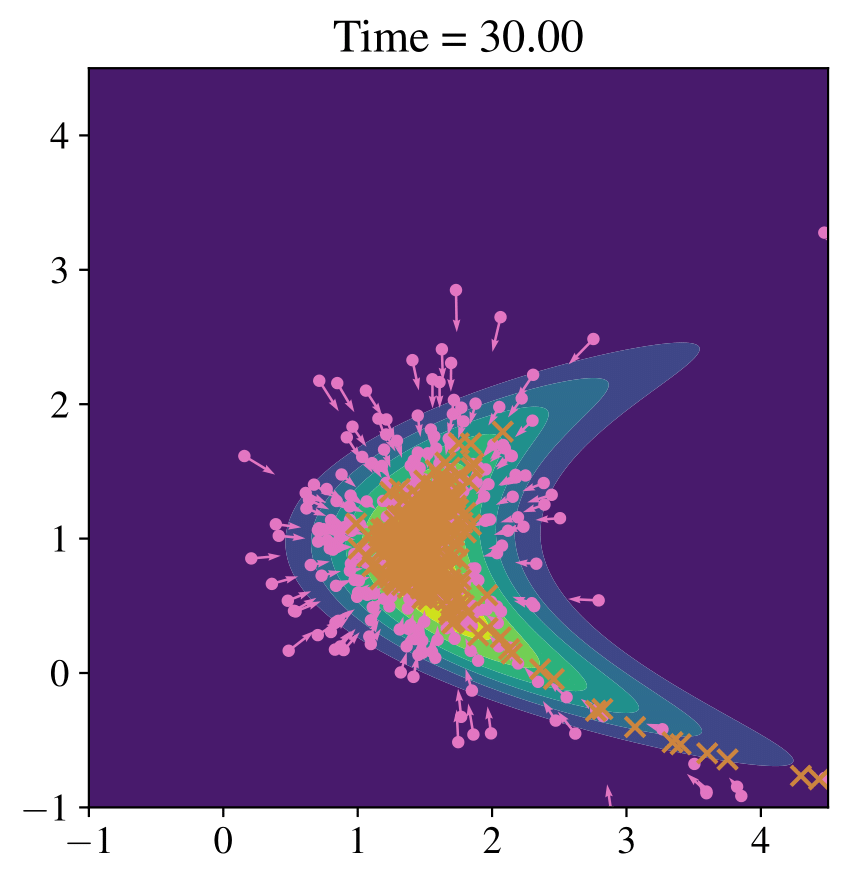}
    \caption{Polarized CBS.}
    \end{subfigure}%
    \caption{Standard and polarized CBS for sampling from a non-Gaussian distribution. The orange crosses mark the position of the weighted means.}
    \label{fig:banana}
\end{figure}

Our final experiment deals with sampling from a non-Gaussian distribution.
Again, we use $\beta=1$.
According to the theoretical analysis in \cite{carrillo2022consensus_sampling} standard CBS is not expected to correctly sample from such a distribution but rather from a Gaussian approximation.
While we do not claim that our polarized method generates an exact sample, our result in \cref{fig:banana} show that polarized CBS approximates the non-Gaussian distribution much better than standard CBS. 
This is due to the fact that the weighted means, indicated as orange crosses, do not collapse to a single point but concentrate in the region of high probability mass.

\section{Conclusion and outlook}

In this article we presented a polarized version of consensus-based optimization and sampling dynamics for objectives with multiple global minima or modes.
For this we localized the dynamics such that every particle is primarily influenced by close-by particles. 
We proved that in the case of sampling from a Gaussian distribution this does not introduce a bias. 
We also suggested a cluster-based version of our polarized dynamics which is computationally more efficient.
Our extensive numerical experiments suggested a large potential of our method for detecting multiple global minima or modes, improving over standard consensus-based methods.

There is a lot of room for future work regarding well-definedness of the Fokker--Planck equation derived above, and stability and convergence of both the mean-field and particle system to consensus \rev using less restrictive assumptions than the ones used in this paper.
We are convinced that the Lyapunov function $\L[\rho_t]$ which we studied in \cref{sec:analysis} will be very helpful in this endeavour.
\nc
Future work will also focus on further numerical improvements of our method, in particular, incorporating a batching strategy similar to the one from \cite{carrillo2021consensus_ML} to further improve performance in high-dimensional optimization.
Finally, a long-term goal will be to find multimodal sampling methods which are provably consistent for multimodal and non-Gaussian distributions.
Given that even gradient-free sampling from unimodal non-Gaussian distributions is still relatively poorly understood, with \cite{schillings2022ensemble} being a promising approach, this will be a challenging task for future work.

\section*{Acknowledgments}

Part of this work was also done while LB and TR were in residence at Institut Mittag-Leffler in Djursholm, Sweden during the semester on \textit{Geometric Aspects of Nonlinear Partial Differential Equations} in 2022, supported by the Swedish Research Council under grant no. 2016-06596.
LB is very thankful to Massimo Fornasier and Oliver Tse for insightful discussions on the analytical aspects of this paper.
Part of this work was done while LB was affiliated with the University of Bonn and the Technical University of Berlin.
PW acknowledges support from MATH+ project EF1-19: Machine Learning Enhanced Filtering Methods for Inverse Problems, funded by the Deutsche Forschungsgemeinschaft (DFG, German Research Foundation) under Germany's Excellence Strategy – The Berlin Mathematics Research Center MATH+ (EXC-2046/1, project ID: 390685689). 
TR acknowledges support by the German Ministry of Science and Technology (BMBF) under grant agreement No. 05M2020 (DELETO) and was supported by the European Unions Horizon 2020 research and innovation programme under the Marie Sk{\l}odowska-Curie grant agreement No 777826 (NoMADS). Most of this study was carried out while TR was affiliated with the Friedrich-Alexander-Universität Erlangen-Nürnberg.

\printbibliography

\end{document}